\numberwithin{equation}{section}
\newtheorem{maintheorem}{Theorem}
\newtheorem{maincorollary}[maintheorem]{Corollary}
\newtheorem{theorem}{Theorem}[section]
\newtheorem{corollary}[theorem]{Corollary}
\newtheorem{proposition}[theorem]{Proposition}
\newtheorem{lemma}[theorem]{Lemma}
\newtheorem{definition}[theorem]{Definition}
\newtheorem{remark}[theorem]{Remark}
\newtheorem{problem}{Problem}
\begin{document}


\title{Ergodic properties of partially hyperbolic diffeomorphisms with topological neutral center}
\author{Gabriel Ponce}
\address{Department of Mathematics, Universidade Estadual de Campinas, 
Campinas-SP, Brazil}
\email{gaponce@unicamp.br}

\date{}

\maketitle


\begin{abstract}
In this work we obtain some metric and ergodic properties of $C^{1+}$ partially hyperbolic diffeomorphisms with one-dimensional topological neutral center, mainly regarding the behavior of its center foliation. Based on a trichotomy for the center conditional measures of any invariant ergodic measure, we show that if these conditionals have full support, then the center foliation is leafwise absolutely continuous, the diffeomorphism is Bernoulli in the $C^{1+}$ case, and an invariance principle occurs in the sense that $M$ may be covered by a finite number of open sets where the system of center conditionals is continuous and $su$-invariant. Using this invariance principle we show that if a local accessibility hypothesis occurs then the center foliation must be as regular as the partially hyperbolic dynamics.

\end{abstract}
\setcounter{tocdepth}{2}
\tableofcontents

\section{Introduction}

A diffeomorphism $f:M \to M$ defined on a compact Riemannian manifold $M$ is said to be partially hyperbolic if there is a nontrivial splitting 
\[TM=E^s\oplus E^c\oplus E^u\]
such that 
\[Df(x)E^{\tau}(x)=E^{\tau}(f(x)), \; \tau\in \{s,c,u\}\]
and a Riemannian metric for which there are continuous positive functions $\mu,\hat{\mu},\nu,\hat{\nu}, \gamma, \hat{\gamma}$ with
\[\nu(p),\hat{\nu}(p) <1, \quad \text{and} \quad \mu(p)<\nu(p)<\gamma(p)<\hat{\gamma}(p)^{-1}<\hat{\nu}(p)^{-1}<\hat{\mu}(p)^{-1},\]
such that for any vector $v\in T_pM$,
\[\mu(p)||v||<||Df(p)\cdot v|| < \nu(p)||v||, \; \text{if} \; v\in E^s(p)\]
\[\gamma(p)||v|| < ||Df(p)\cdot v|| < \hat{\gamma}(p)^{-1}||v||, \; \text{if} \; v\in E^c(p)\]
\[\hat{\nu}(p)^{-1}||v||< ||Df(p)\cdot v||<\hat{\mu}(p)^{-1}||v||, \; \text{if} \; v\in E^u(p).\]

We say that $f$ is \textit{volume preserving}, or that $f$ is \textit{conservative}, if $f$ preserves a probability measure which is equivalent to the volume measure given by the Riemannian structure of $M$. The stable and unstable directions of $f$, $E^s$ and $E^u$ respectively, integrate to $f$-invariant foliations $\mathcal F^s$ and $\mathcal F^u$, called \textit{stable} and \textit{unstable} foliations of $f$ respectively. The center direction, however, is not necessarily integrable.

For a partially hyperbolic diffeomorphism $f:M\to M$ and for $E\subset TM$ be a $Df$-invariant bundle, we say that
\begin{itemize}
\item $f$ is Lyapunov stable in the direction $E$ if for any $\varepsilon>0$ there is $\delta>0$ such that for any $C^1$ path $\gamma$ tangent to $E$
\[\operatorname{length}(\gamma)<\delta \Rightarrow \operatorname{length}(f^n\gamma)<\varepsilon, \quad \forall n\geq 0;\]
\item $f$ has Lyapunov stable center if $f$ is Lyapunov stable in the direction $E^c$;
\item $f$ has topological neutral center if $f$ and $f^{-1}$ both have Lyapunov stable center.
\end{itemize}

By  \cite[Corollary $7.6$]{HHUSurvey} it is known that if a partially hyperbolic diffeomorphism $f:M \to M$ has topological neutral center, then it is \textit{dynamically coherent}, that is, both $E^{cs}:=E^c\oplus E^s$ and $E^{cu}:=E^c\oplus E^u$ integrate to $f$-invariant foliations. In this case the $f$-invariant foliation $\mathcal F^c:= \mathcal F^{cs}\cap \mathcal F^{cu}$ is tangent everywhere to the center direction and is called center foliation. A stronger condition, which implies Lyapunov stable center and consequently implies the integrability of $E^c$, is the \textit{neutral center} condition where one requires a uniform bound on the derivatives of $f^n$ along the center. More precisely, we say that $f$ has neutral center direction if there exists $K>1$ such that
\[\frac{1}{K} \leq ||Df^n|E^c(x)|| \leq K\]
for every $x\in M$ and any $n\in \mathbb Z$.
Not every $C^1$ partially hyperbolic diffeomorphism with topological neutral center has neutral center as one may see in \cite[Proposition 2.3]{BonattiZhang}.
The nomenclatures ``neutral center'' and ``topological neutral center'' appeared for the first time in \cite{Zhang} motivated by examples of such diffeomorphisms which appeared in the construction of anomalous partially hyperbolic diffeomorphisms, providing counterexamples to the so called Pujals' conjecture, given in \cite{BPP, BGP} and \cite{BZ}. We remark, however, that some of the examples obtained in these latter works are not transitive. 
Partially hyperbolic diffeomorphisms with topological neutral center were also studied recently in \cite{BonattiZhang} where the authors proved that, if $f$ is transitive then there is a continuous system of center arc-lengths preserved by $f$. As a consequence the authors also obtained a topological classification for $f$ when $\text{dim}(M)=3$. 

Also recently, the author, joint with M. E. Noriega and R. Var\~ao \cite{PNV}, studied the disintegration of ergodic invariant measures along an invariant one-dimensional foliation, along which the dynamics preserves a continuous system of arc-lengths. As  a consequence it is showed that for $f:M \to M$ a $C^1$ partially hyperbolic diffeomorphism with one-dimensional topological neutral center, the disintegration of any $f$-invariant ergodic measure $\mu$ along $\mathcal F^c$ is either supported on a countable set, a Cantor set or it is full and the conditional measures along the center leaves are equivalent to the leaf measures given by the arc-length system. 

The main goal of this paper is to study and address some problems on the metric and ergodic properties of partially hyperbolic diffeomorphisms with one-dimensional topological neutral center, in general, assuming that the $f$-invariant measure $\mu$ has full support. In what follows, we address three main lines of investigation which will be detailed in the sequel:
\begin{itemize}
\item the properties of systems of center-metrics preserved by $f$;
\item the occurrence of the Bernoulli property for such systems;
\item an invariance principle concerning the disintegration of an ergodic measure along the center foliation.
\end{itemize}

Naturally, the starting point of all the results obtained here are the works \cite{PNV} and \cite{BonattiZhang}.

\subsection{The Bernoulli property for partially hyperbolic diffeomorphisms with topological neutral center}

Given a measure space $(X,\mathcal B, \mu)$ and a measure preserving automorphism $f:X \to X$ with finite entropy, we say that $f$ is a \textit{Bernoulli automorphism}, that it is a \textit{Bernoulli system} or that it \textit{has the Bernoulli property}, if $(f,\mu)$ is measurably conjugate to a $(\sigma, \rho)$ where $\sigma:\Sigma^{\mathbb Z} \to \Sigma^{\mathbb Z}$, $\Sigma=\{0,1,\ldots, n-1\}$ with  $n\in \mathbb N$, is a standard Bernoulli shift and $\rho$ is the Bernoulli measure in $\Sigma^{\mathbb Z}$ defined by some distribution $p=(p_0, \ldots, p_{n-1})$.

Bernoulli systems are extremely important in ergodic theory and dynamical systems in general due to its huge variety of dynamical and ergodic properties. Although the Bernoulli property is much stronger than mixing, many of the natural examples arising in smooth dynamics which are mixing are actually Bernoulli. For example, Y. Katznelson proved in \cite{K} that every ergodic automorphism of tori is actually a Bernoulli automorphism. Few years latter, it was realized that there was a deep connection between what are called hyperbolic structures and the occurrence of ergodic properties such as ergodicity, mixing and Bernoulli property. In the seminal paper \cite{Anosov1} D. Anosov proved that geodesic flows of negatively curved compact manifolds are ergodic, and furthermore they are \textit{$K$-systems}, i.e, they have completely positive entropy. One of the key properties used in that proof is the fact that the stable and unstable foliations of such dynamical systems are \textit{absolutely continuous}. 


Being a $K$-system is already much stronger than ergodicity and in \cite{OW} using the hyperbolic structure and Ornstein theory, D. Ornstein and B. Weiss proved that geodesic flows in compact surfaces with negative curvature are actually Bernoulli, which is far stronger than ergodicity. The strategy stablished in \cite{OW} was pushed forward by several other authors and for much more general contexts such as: volume-preserving non-uniformly hyperbolic diffeomorphisms \cite{YP3}, non-uniformly hyperbolic singular maps and flows \cite{CH}, partially hyperbolic derived from Anosov diffeomorphisms \cite{PTV2}. In all the cases where uniform or non-uniform hyperbolicity is present, the central roles are played by the absolute continuity of stable and unstable foliations, transversality, the $K$-property and the uniform contraction and expansion of the stable and unstable foliations respectively. For more details on the main arguments used to extend the Kolmogorov to obtain the Bernoulli property we refer the reader to \cite{PVBook}.

We recall that for a partially hyperbolic diffeomorphism $f:M \to M$, defined on a compact Riemannian manifold $M$, we say that 
\begin{itemize}
\item $f$ is \textit{accessible} if any two points $x,y\in M$ may be connected by a concatenation of $C^1$-paths each of which is fully contained in a stable or an unstable leaf of $f$ -- this concatenation is called an $su$-path; 
\item $f$ is \textit{essentially accessible} if any measurable set which is an union of accessibilities classes, must have full of zero volume measure (the accessibility class of a point $x\in M$, $AC(x)$ is the set of all points $y\in M$ which may be reached from $x$ through an $su$-path);
\item $f$ is \textit{center-bunched} if $\nu, \hat{\nu}, \gamma$ and $\hat{\gamma}$ can be chosen so that:
\[\max\{ \nu,\hat{\nu}\} <\gamma \hat{\gamma}.\]
\end{itemize}

Sometimes when working under $C^{1+\alpha}$ regularity, a stronger form of center-bunching is required. We refer the reader to \cite{Aaron1} for more details on it.

All along the paper we denote by $\operatorname{PH}^r_{\mu}(M)$ the set of $C^r$-partially hyperbolic diffeomorphisms on $M$ preserving a given measure $\mu$.

The occurrence of the Bernoulli property for partially hyperbolic diffeomorphisms is a much more delicate issue than the same for the context of (non)uniformly hyperbolic diffeomorphisms, and does not follows from the Kolmogorov property (see a recent example in dimension four given by F. Hertz, A. Kanigowski and K. Vinhage \cite{KHV}). The question of whether the Kolmogorov and the Bernoulli property are equivalent for volume preserving $C^2$ partially hyperbolic diffeomorphisms on three dimensional manifolds is still open. It is worth mentioning here that from \cite{HHU1, ACW}, there exists a $C^1$-open and dense set of Bernoulli diffeomorphisms among the $C^r$, $r>1$, volume preserving partially hyperbolic diffeomorphisms on a compact connected manifold. Moreover, very recently G. N\'u\~nez and J. Hertz \cite{NH} have proved that for a residual set $\mathcal R$ of the family of $C^1$, volume preserving partially hyperbolic diffeomorphisms of a three manifold, given $f\in \mathcal R$ the existence of a minimal expanding or contracting $f$-invariant foliation implies that $f$ is stably Bernoulli. The same authors then conjectured (see \cite[Conjecture 1.2]{NH}) that for a generic set of such diffeomorphisms, either all the Lyapunov exponents vanish almost everywhere or a minimal invariant expanding/contracting foliation exists. 


A strong result by Burns-Wilkinson \cite{BW} states that for $m$ a smooth measure on $M$ and $f \in \operatorname{PH}^2_{m}(M)$, if $f$ is center-bunched and essentially accessible, then $f$ has the Kolmogorov property. This raises the natural question of whether, for these diffeomorphisms, the $K$-property may be pushed to the Bernoulli property. 

\begin{problem} (see Question 11.11, raised by K. Burns in \cite{hasselblatt_2007})
Let $f$ be a $C^{1+\alpha}$ (essentially) accessible, center-bunched, partially hyperbolic diffeomorphism. Is $f$ Bernoulli?
\end{problem}

This question is still widely open and is extremely hard if no other hypothesis is assumed for the center direction.  In the partially hyperbolic situation presented in \cite{PTV2}, that is for partially hyperbolic diffeomorphisms of $\mathbb T^3$ which are homotopic to a linear Anosov, absolute continuity of the center-stable (or center-unstable foliation) is assumed, and the absence of uniform contraction (or uniform expansion) is bypassed by analyzing the measure theoretical behavior of the center foliation and proving that essentially one may reduce each center leaf to a subset where a topological contraction (expansion) occurs and with arbitrarily large density. This approach is only possible because derived from Anosov diffeomorphisms of $\mathbb T^3$ are semi-conjugate to their linearization and, being so, they carry on their central leaves a type of topological contraction (or expansion) over long arcs of center leaves. For a general volume preserving $C^2$ partially hyperbolic diffeomorphism this approach is not possible and, even assuming accessibility, center-bunching condition and existence of an absolutely continuous center-stable foliation, it is not clear how to obtain the Bernoulli property, if this is the case. We also remark that recently, D. Dolgopyat, F. Hertz and A. Kanigownski \cite{DHK} showed that every $C^{1+\alpha}$ conservative diffeomorphism which is exponentially mixing is Bernoulli. This strong result provides new insights on how very strong mixing-type properties may bypass the presence of some non-expanding/non-contracting center behavior.

As the absence of uniform contraction/expansion behavior of the center manifold is a major obstruction to obtain the partially hyperbolic context, it is natural to wonder if some control hypothesis for the center would imply the Bernoulli property. We then address the following problem due to A. Wilkinson.

\begin{problem}(see \cite[Problem 49]{HHUSurvey}) \label{prob:Amie}
Let $f:M \to M$ be a volume preserving $C^{1+\alpha}$-partially hyperbolic diffeomorphism which is accessible and center-bunched. If $f$ has Lyapunov stable center is it true that $f$ is Bernoulli?
\end{problem}


Here we are able to provide a substantial advance to Problem \ref{prob:Amie}, replacing Laypunov stability by topological neutral center and obtaining a dichotomy for the measurable behavior of the center conditionals.

\begin{maintheorem}\label{theorem:1dimens}
Let $f:M\to M$ be a $C^{r+}$, $r\geq 1$, partially hyperbolic diffeomorphism with orientable one dimensional center bundle, whose orientation is preserved by $f$. If $f$ preserves a smooth ergodic measure $\mu$ and is topologically neutral along the center direction, then one of the following holds:
\begin{itemize}
\item the conditional measures of the disintegration of $\mu$ along the center foliation $\mathcal F^c$ are atomic or supported on Cantor subsets of the leaves;
\item the center foliation, $\mathcal F^c$, is leafwise absolutely continuous and $f$ is Bernoulli.
\end{itemize}
\end{maintheorem}

We remark that we are not requiring essential accessibility on Theorem \ref{theorem:1dimens}.
%
To prove Theorem \ref{theorem:1dimens} we first show the trichotomy for the disintegration of $\mu$ along the center manifold (atomicity, Cantor support or leafwise absolute continuity), which will be a consequence of the Theorems presented in the next section, and then we need to show that $f$ is Bernoulli when $\mathcal F^c$ is leafwise absolutely continuous. This second part is proved by revisiting the arguments employed in \cite{YP3, CH, PTV2} and making technical adjustments, more precisely we prove the following.

\begin{maintheorem} \label{theo:KBGeneral}
Let $f:M\rightarrow M$ be a $C^{1+}$ volume preserving partially hyperbolic diffeomorphism which satisfies:
\begin{itemize}
\item[1)] $f$ is dynamically coherent;
\item[2)] $\mathcal F^{cs}$ holonomies between almost every pair of $\mathcal F^{u}$ local leaves are absolutely continuous, or equivalently $\mathcal F^{cs}$ is leafwise absolutely continuous.
\end{itemize}
If $f$ has Lyapunov stable center and is a $K$-automorphism then it is a Bernoulli automorphism.
\end{maintheorem}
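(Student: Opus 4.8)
\textbf{Proof plan for Theorem \ref{theo:KBGeneral}.}

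The plan is to run the classical Ornstein--Weiss argument for Bernoullicity of $K$-automorphisms using the ``very weak Bernoulli'' (VWB) criterion, adapting the Katok--Sinai--Pesin type constructions of \cite{YP3, CH, PTV2} to the partially hyperbolic setting where Lyapunov stability of the center direction substitutes for uniform contraction/expansion along $E^c$. First I would reduce to showing that a well-chosen partition $\mathcal P$ generates under $f$ and is VWB; since $f$ is a $K$-automorphism, it suffices (as in the cited works) to exhibit one increasing sequence of finite partitions generating the Borel $\sigma$-algebra mod $0$, each of which is VWB, because the VWB property passes to the generated factor and a $K$-automorphism with a generating VWB partition is Bernoulli (Ornstein theory). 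The building blocks of such partitions will be ``rectangles'' adapted to the invariant foliations: along the unstable direction one uses genuine unstable manifolds, along $E^{cs}$ one uses plaques of the $\mathcal F^{cs}$ foliation guaranteed by dynamical coherence, and the key point is to control the conditional measures on these plaques under forward iteration.

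The heart of the matter is a Hopf-type / absolute-continuity argument showing that, conditioned on pieces of unstable manifolds, the dynamics looks asymptotically like a Bernoulli shift. Here hypothesis (2) — absolute continuity of $\mathcal F^{cs}$-holonomies between almost every pair of local unstable leaves — plays the role that absolute continuity of the stable foliation plays in the uniformly hyperbolic case: it lets one transport conditional measures from one unstable leaf to a nearby one with controlled Radon--Nikodym derivatives, which is exactly what is needed to compare the ``$\bar d$-distance'' between conditional distributions of $\bigvee_{i=0}^{n-1} f^{-i}\mathcal P$ along different atoms. Lyapunov stability of the center enters when estimating how the diameters of the center-stable plaques evolve: given $\varepsilon>0$, Lyapunov stability provides $\delta>0$ so that center arcs of length $<\delta$ stay of length $<\varepsilon$ under all forward iterates, and combined with the uniform contraction on $E^s$ this keeps the forward images of a small $cs$-plaque of uniformly small diameter. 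This replaces the ``large density topological contraction'' trick of \cite{PTV2}, which was special to derived-from-Anosov maps, by a direct diameter bound that holds globally.

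Concretely the steps are: (i) fix $\varepsilon$, use Lyapunov stability plus uniform stable contraction to choose a scale at which $cs$-plaques have forward orbits of controlled diameter; (ii) build a Markov-like family of rectangles with unstable sides genuine unstable plaques and the complementary sides $cs$-plaques, and let $\mathcal P_k$ be the refinement of this family making a generating sequence; (iii) for typical atoms $A, A'$ of $\bigvee_{i=0}^{n-1} f^{-i}\mathcal P_k$, push forward the conditional measures, use the $K$-property (via the Pinsker/Rokhlin argument) to get that the future is asymptotically independent of the far past, and use the $\mathcal F^{cs}$-holonomy absolute continuity from hypothesis (2) to identify the conditionals on different unstable leaves up to a density bounded close to $1$; (iv) conclude $\bar d$-smallness, hence VWB, hence — by Ornstein's isomorphism theorem and the $K$-assumption — Bernoullicity. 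The main obstacle I anticipate is step (iii): making the VWB estimate uniform requires that the geometry of the rectangles and the distortion of holonomies be controlled simultaneously on a set of almost full measure, and the only tool replacing uniform center behavior is the non-quantitative Lyapunov stability, so the argument must be arranged so that all quantitative estimates (distortion, density bounds, Lebesgue-density of good sets) are extracted before invoking Lyapunov stability, which then only needs to supply a crude diameter bound. Verifying that the $C^{1+\alpha}$ regularity is enough for the requisite bounded-distortion estimates along unstable manifolds — as in \cite{YP3} — is the remaining technical point.
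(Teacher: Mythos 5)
Your plan is correct and matches the paper's own proof essentially step for step: the paper proves Theorem \ref{theo:KBGeneral} by building $\varepsilon$-regular coverings by product rectangles with unstable and $\mathcal F^{cs}$-plaque sides (using hypothesis (2) exactly as you say, to get bounded Radon--Nikodym derivatives for $cs$-holonomies between typical unstable leaves), constructing an $\varepsilon$-measure-preserving map $\theta$ along $cs$-plaques via the Chernov--Haskell machinery, and invoking Lyapunov stability of the center together with uniform stable contraction only at the final stage to get a crude forward-diameter bound on $cs$-plaques, which feeds into the Birkhoff/$\bar d$-estimate proving that partitions with piecewise smooth boundaries and small diameter are VWB, hence Bernoulli by Ornstein's theorem. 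The one organizational point you flagged as a potential obstacle — that all quantitative distortion and density estimates must be locked in before the non-quantitative Lyapunov stability is invoked — is precisely how the paper arranges the argument, so your anticipation was accurate rather than a gap.
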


\begin{remark}
The equivalence mentioned on the second item  is a consequence of Lemma \ref{lemma:PVW}, which will be proved in Section \ref{sec:mtpf}. Also, since inverse of $K$-systems are also $K$-systems, by taking $f^{-1}$ we may replace $\mathcal F^{cs}$ to $\mathcal F^{cu}$ in the second item.
\end{remark}

For the sake of the reader, we show in Section \ref{sec:appB} how the construction of $\varepsilon$-regular covers can be made using only leafwise absolute continuity of $\mathcal F^c$ instead of absolute continuity, but we conclude the proof of Theorem \ref{theo:KBGeneral} in Appendix A  as the technical adjustments required are not used elsewhere in the paper. 


\subsection{Metric properties of the center foliation}

Given a foliation $\mathcal F$ of $M$ by $C^1$ leaves, for any leaf $L\in \mathcal F$ and $x,y \in L$ we denote by $d^{\mathcal F}(x,y)$ the distance between $x$ and $y$ measured in the Riemannian distance of $L$ (we omit  $L$ in the notation). The distance $d^{\mathcal F}(x,y)$ will be called the \textit{leaf distance} between $x$ and $y$. In case $f:M \to M$ is a $C^1$ partially hyperbolic diffeomorphism with center foliation $\mathcal F^c$, we replace the notation $d^{\mathcal F^c}$ by the more convenient notation $d_c$. That is, for $y\in \mathcal F^c(x)$, $d_c(x,y)$ denotes the distance between $x$ and $y$ measured along the leaf $\mathcal F^c(x)$.

We now define two related but distinct properties: the existence of an invariant arc-length system and leafwise equicontinuity.

The following is a generalization of the concept of center arc-length system defined on \cite{BonattiZhang}.

\begin{definition}\label{Flenghtsystem} (see \cite{BonattiZhang, PNV})
Given a one-dimensional foliation $\mathcal F$ of $M$, invariant by a $C^1$ difeomorphism $f:M\to M$, we will call $\{l_x\}$ a $\mathcal{F}$-arc-length system if, for each $x\in M$,  $l_x$ is a map defined on the simple arcs on $\mathcal{F}(x)$, where two simple arcs are considered the same if one is only a reparametrization of the other,  and  $l_x$ satisfies the following properties:
		\begin{enumerate}
			\item $l_x$ is strictly positive on non-degenerate arcs and vanishes on degenerate arcs,
			\item for any simple arc $\gamma:[0,1]\to \mathcal{F}(x)$ and $a\in (0,1)$, 
			\[
			l_x(\gamma[0,a])+l_x(\gamma[a,1])=l_x(\gamma[0,1]),
			\]
			\item for any simple arc $\gamma:[0,1]\to \mathcal{F}(x)$,
			\[l_x(\gamma[0,1])=l_{f(x)}(f(\gamma[0,1])),\]
		\item[(4)] given a sequence of simple arcs $\gamma_n :[0,1]\to \mathcal F(x_n)$, converging (with respect to the $C^0$ topology) to a simple arc $\gamma:[0,1] \to \mathcal F(x)$, we have
\[l_{x_n}(\gamma_n) \rightarrow l_x(\gamma), \quad \text{as} \; n\rightarrow +\infty.\]	
\end{enumerate}	
	\end{definition}
We will denote by $\text{Diff}^1_{ac}(M)$ the set of all $C^1$ diffeomorphism $f:M \to M$, preserving some one-dimensional foliation $\mathcal F$ and a system of arc-lengths $\{l_x\}_{x\in M}$ over $\mathcal F$. 

 As observed in \cite{PNV} whenever a $f$-invariant foliation is endowed with a $\mathcal{F}$-arc-length system, this arc-length system induces a family of $f$-invariant metrics $\{d_x\}_{x\in M}$ by taking:
 \begin{equation}\label{defi:metrics}
 d_x(y,z):=\min \{l_{x}(\gamma) \, : \, \gamma:[0,1]\to \mathcal{F}(x) \; \text{is simple with} \; \, \gamma(0)=y, \, \gamma(1)=z\}.\end{equation}
This system is also additive in the following sense: given any simple arc $\gamma:[0,1]\to \mathcal{F}(x)$ we have
 \[d_x(\gamma(p_1),\gamma(p_3)) = d_x(\gamma(p_1),\gamma(p_2))+d_x(\gamma(p_2),\gamma(p_3)), \quad \forall \; 0\leq p_1 \leq p_2 \leq p_3 \leq 1.\]
 It is not true that $\{d_x\}_{x\in M}$ is continuous in the global sense, i.e, it is possible that we may find sequences $x_n \rightarrow x$, $y_n \rightarrow y$, with $y_n \in \mathcal F(x_n)$, $y\in \mathcal F(x)$ but $d_{x_n}(x_n,y_n) \nrightarrow d_{x}(x,y)$ (for example, for compact foliations where the leaves do not have uniformly bounded length). As mentioned in \cite{PNV} It is true, however, that restricted to plaques inside local charts this family of metrics are continuous. This property was called \textit{plaque-continuity} in \cite{PNV}, as we recall below.
 
Consider $\mathcal F$ a continuous foliation of $M$. A function $F:\bigcup_{x\in M} \mathcal F(x) \times \mathcal F(x) \to [0,\infty)$ will be called plaque-continuous if given any local chart $U$ of $\mathcal F$, for any sequences  $x_n \rightarrow x$, $y_n \rightarrow y$ with $y_n \in \mathcal F|U(x_n)$, $x\in U$ and $y \in \mathcal F|U(x)$, we have
\[\lim_{n\rightarrow \infty} F(x_n,y_n) = F(x,y).\]
 
 \begin{proposition}\cite{PNV}
 Let $f:M \to M$ be a homeomorphism preserving a one-dimensional continuous foliation $\mathcal F$ endowed with an invariant $\mathcal{F}$-arc-length system. The metric system defined by \eqref{defi:metrics} is plaque-continuous.
 \end{proposition}

\begin{maintheorem}\label{lemma:superaux2}
Let $f\in \text{Diff}^1_{ac}(M)$ with associated foliation $\mathcal F$. Assume that $\mathcal F$ is orientable and $f$ preserves the orientation of $\mathcal F$. Given any plaque-continuous metric system $\{\rho_x\}_{x\in M}$ preserved by  $f$, and $\mu$ a $f$-invariant ergodic measure with full support, then there exists a constant $C>0$ such that
\[\rho_x(x,y) \leq C\cdot  d_x(x,y), \quad \forall x\in M, y\in \mathcal F(x).\]
Moreover, if the disintegration of $\mu$ along $\mathcal F$ is neither atomic nor Cantor, then  $\mathcal F$ is leafwise absolutely continuous with respect to the leaf Lebesgue measure.
\end{maintheorem}

\begin{remark}
We remark that although $\mu$ is assumed to have full support on Theorem \ref{lemma:superaux2}, it does not need to be absolutely continuous with respect to a volume measure on $M$.
\end{remark}

To prove the second part of Theorem \ref{lemma:superaux2} we need to construct an invariant system of metrics which is plaque-continuous and whose distances are somehow comparable to the leaf measures. This is possible in a slightly more general setting where we assume simply that $f$ is equicontinuous along $\mathcal F$.

\begin{definition}
If $f:M\to M$ is a $C^1$ diffeomorphism and $\mathcal F$ is a $f$-invariant foliation, we say that $f$ is equicontinuous along $\mathcal F$ or that $f$ is leafwise equicontinuous (when $\mathcal F$ is implicit), if given $\varepsilon>0$ there exists $\delta>0$ for which, for any pair of points $x,y$ in the same $\mathcal F$-leaf we have
\[d^{\mathcal F}(x,y)<\delta \Rightarrow \sup_{n\in \mathbb Z}d^{\mathcal F}(f^n(x),f^n(y)) < \varepsilon .\]
\end{definition}
We denote by $\text{Diff}^1_{eq}(M)$ the set of all $C^1$ diffeomorphism $f:M \to M$, preserving some one-dimensional foliation $\mathcal F$ along which $f$ is equicontinuous. \\

If $f$ is a $C^1$ diffeomorphism which is equicontinuous along an orientable continuous $f$-invariant foliation $\mathcal F$ with dimension one, we may define a system of invariant metrics over the leaves of $\mathcal F$ by taking 
\[D_x(x,y):=\sup_{n\in \mathbb Z} d^{\mathcal F}(f^n(x),f^n(y)),\] 
for every $y\in \mathcal F(x)$. In what follows we show that this system of metrics is continuous when $\mathcal F$ is orientable and with orientation being preserved by $f$.

\begin{maintheorem} \label{lemma:superaux}
Let $f:M\to M$ be a $C^1$-diffeomorphism which is equicontinuous along an orientable continuous $f$-invariant foliation $\mathcal F$ with dimension one. Assume $f$ preserves the orientation of the leaves. Then, the system of metrics $\{D_x\}_{x\in M}$ given by
\[D_x(x,y):=\sup_{n\in \mathbb Z} d^{\mathcal F}(f^n(x),f^n(y)),\quad \forall \; y\in \mathcal F(x)\]
is continuous. 
 \end{maintheorem}

For the continuous system of metrics given by the previous Theorem, it can be proved that given any ergodic $f$-invariant measure $\mu$, there exists an $f$-invariant subset $S\subset X$ of full $\mu$-measure such that for all $x\in S$ we have
\[\sup_{n\in \mathbb N} d^{\mathcal F}(f^n(x),f^n(y))= D_x(x,y), \quad \forall \; y\in \mathcal F(x).\]
This fact is not essencial to the proof of the main theorems, therefore we prove it in Appendix B..



\subsection{Local invariance principle for the center disintegration}

Consider a subset $U \subset M$ 
foliated by a pair of continuous transversal foliations $\mathcal F$ and $\mathcal G$ with respect to which $U$ is a product set, that is, $U$ has global product structure with respect to $\mathcal F$ and $\mathcal G$ in the sense that if we define
\begin{equation}\label{eq:Q}
Q: \mathcal F(x_0) \times \mathcal G(x_0) \to M, \quad Q(a,b):=\mathcal G(a) \cap \mathcal F(b),\end{equation}
then
\[U = Q(A^{\mathcal F} \times B^{\mathcal G}),\]
for some subsets $A^{\mathcal F} \subset \mathcal F(x_0)$,  $B^{\mathcal G}\subset \mathcal G(x_0)$. Inside $U$ we may define global $\mathcal F$-holonomies between two $\mathcal G$-leaves and vice versa. Given $x,y\in U$, we define
\[H^{\mathcal F}_{x,y}: \mathcal G(x) \to \mathcal G(y) , \quad H^{\mathcal F}_{x,y}(z):= \mathcal F(z)\cap \mathcal G(y).\]
Since $\mathcal F$ is continuous, $H^{\mathcal F}_{x,y}$ is a homeomorphism.

Denote by $\{\mu^{\mathcal F}_x\}$ the disintegration of $\mu$ along the plaques of $\mathcal F$ on $U$ and $\{\mu^{\mathcal G}_x\}$ the disintegration of $\mu$ along the plaques of $\mathcal G$ on $U$. We say that the disintegration $\{\mu^{\mathcal G}_x\}$ is \textit{invariant by $\mathcal F$-holonomies} \footnote{We remark that in \cite{TahzibiYang} the definition is slightly different as the the invariance of the conditional measures is required to hold only inside a full measure subset of $M$.} if
\[\forall \; x\in U, y\in \mathcal F(x), \quad  (H^{\mathcal F}_{x,y})_*\mu^{\mathcal G}_x = \mu^{\mathcal G}_y.\]

The following Lemma proved in \cite{YangTahzibi} shows that when the disintegration along $\mathcal G$ is invariant by $\mathcal F$-holonomies the measure has local product structure.

\begin{lemma}\label{lemma:YT1} (see \cite[Lemma 4.2]{YangTahzibi})
If $\{\mu^{\mathcal F}_x\}$ is $\mathcal G$-invariant then $\{\mu^{\mathcal G}_x\}$ is $\mathcal F$-invariant and $\mu = Q_*(\mu_{x_0}^{\mathcal F} \times \mu_{x_0}^{\mathcal G})$ for typical $x_0\in U$.
\end{lemma}

For the sake of simplicity we fix the following nomenclature: if a certain disintegration is invariant by $\mathcal F^u$-holonomies, where $\mathcal F^u$ is the unstable foliations of a certain partially hyperbolic map, we say that it is \textit{$u$-invariant} (we define \textit{$s$-invariance} in analogy to this definition).

In several settings, asymptotic properties of certain partially hyperbolic dynamics imply the existence of center disintegrations which are invariant by stable and unstable holonomies. In general, the occurrence of this phenomenon provides some rigidity for the system in terms of a conjugacy with a simpler model, see for example \cite{AV, AVW, AVWII, TahzibiYang}.

In our context we show that, if the support of the center conditionals is full, then restricted to local charts there is a continuous disintegration of $\mu$ along the center foliation which is locally invariant by stable and unstable holonomies.

\begin{maintheorem}\label{theorem:IP}
Let $f:M\to M$ be a $C^{r+}$, $r\geq 1$, partially hyperbolic diffeomorphism with orientable one dimensional topological neutral center bundle, whose orientation is preserved by $f$. Let $\mu$ be an ergodic $f$-invariant probability measure with full support. Then there is a finite cover of $M$ by open neighborhoods $\mathcal U$, such that for each $U\in \mathcal U$ either:
\begin{itemize}
\item[1)] the conditional measures of the disintegration of $\mu( \cdot | U)$ along $\mathcal F|U$ are atomic or supported on Cantor subsets of the respective leaves or
\item[2)] there is a disintegration of $\mu( \cdot | U)$ along $\mathcal F^c|U$ which is continuous, $s$-invariant and $u$-invariant.
\end{itemize}
\end{maintheorem}

\begin{remark}
In the previous theorem we use the notation $\mu(\cdot | U)$ to denote the restriction of $\mu$ to $U$, that is, it is the probability measure on $U$ given by $\mu(B | U):=\mu(U)^{-1}\cdot \mu(B)$.  The notation $\mathcal F|U$ stands for the foliation on $U$ induced by the restriction of $\mathcal F$ on $U$.
\end{remark}

As a corollary we show that for such conservative diffeomorphisms, $\mathcal F^{c}$ is leafwise absolutely continuous if, and only if, either (and consequently both) $\mathcal F^{cs}$ or $\mathcal F^{cu}$ are leafwise absolutely continuous.

\begin{maincorollary}\label{corofn}
Let $f:M\to M$ be a $C^{r+}$, $r\geq 1$, conservative partially hyperbolic diffeomorphism with orientable one dimensional topological neutral center bundle. The following are equivalent:
\begin{itemize}
\item[1)] $\mathcal F^c$ is leafwise absolutely continuous.
\item[2)] $\mathcal F^{cs}$ is leafwise absolutely continuous.
\item[3)] $\mathcal F^{cu}$ is leafwise absolutely continuous.
\end{itemize}
\end{maincorollary}
\begin{proof}
Let $\mu$ be the smooth measure preserved by $f$. From \cite{AVWII} we already know that (1) implies\footnote{This does not depend on the topological neutral hypothesis.} (2) and (3). Assume (2) holds. Therefore, for a typical center stable leaf $L^{cs}$ we have 
\[\mu_{L^{cs}} \sim \lambda_{L^{cs}},\]
where $\mu_{L^{cs}}$ denotes the conditional measure of $\mu$ along the center-stable leaf $L^{cs}$ and $\lambda_{L^{cs}}$ is the leaf Lebesgue measure of $L^{cs}$.
Now, by the previous theorem and by \cite{TahzibiYang}, we have that, inside $L^{cs}$, the center-holonomies preserve the disintegration of $\mu_{L^{cs}}$ along the unstable, which are equivalent to Lebesgue. That is, $\mathcal F^c$ is leafwise absolutely continuous inside $L^{cs}$. Since this holds for almost every center-stable leaf, it follows that $\mathcal F^c$ is leafwise absolutely continuous on $M$ as a whole. Thus (2) implies (1). Analogously we prove that (3) implies (1), concluding the proof.
\end{proof}

The conclusion of Corollary \ref{corofn} is not true in general, even if $\mathcal F^{cs}$ is smooth. Indeed in \cite{PT} the authors construct partially hyperbolic maps, homotopic to a linear Anosov map on $\mathbb T^3$, with smooth center-unstable foliation and whose center Lyapunov exponent is zero for Lebesgue almost every point. Recently A. Tahzibi and J. Zhang \cite{TahzibiZhang} proved that the center foliation for these diffeomorphisms must be atomic with respect to the volume measure (as, in this case, it is a non-hyperbolic invariant measure). Therefore $\mathcal F^{cs}$ is smooth but $\mathcal F^c$ is not leafwise absolutely continuous.

At last, using Theorem \ref{theorem:IP} we are also able to show that in the conservative case, if $f$ is locally accessible (see definition below) and the center conditionals have full support, then the center foliation is as regular as $f$. 

\begin{definition} (cf. \cite[Definition 2.1]{KatokKononenko})
We say that a partially hyperbolic diffeomorphism $f:M \to M$ is locally accessible if given local chart $U$ of $M$ and any $x,y\in M$, there exists a sequence $x_0=x,x_1,x_2, \ldots, x_{n-1}, x_n =y$ with
\begin{itemize}
\item[1)] $x_i \in U$ for all $0\leq i \leq n$,
\item[2)] $x_i \in \mathcal F^{\tau_i}|U(x_{i-1})$, $1\leq i \leq n$, where $\tau_i \in \{s,u\}$.
\end{itemize}
\end{definition}

\begin{maintheorem}\label{theorem:centerregular}
Let $f:M\to M$ be a $C^{r+}$, $r\geq 1$, locally accessible partially hyperbolic diffeomorphism with orientable one dimensional topological neutral center bundle, whose orientation is preserved by $f$. Let $\mu$ be an ergodic smooth $f$-invariant probability measure. Either:
\begin{itemize}
\item[1)] the conditional measures of the disintegration of $\mu( \cdot | U)$ along $\mathcal F|U$ are atomic or supported on Cantor subsets of the respective leaves or
\item[2)] $\mathcal F^c$ is a $C^r$ foliation.
\end{itemize}
\end{maintheorem}

Local accessibility is clearly stronger than accessibility and has been verified only for certain very selective classes of partially hyperbolic dynamics. A natural question is:

\begin{problem}
 Can we replace local accessibility by accessibility on the hypothesis of Theorem \ref{theorem:centerregular} ?
 \end{problem}

\section{Preliminaries on foliations and measure theory}\label{sec:mtpf}



Let $M$ be a manifold of dimension $d\geq 2$. A foliation with $C^r$ leaves, $r\geq 1$, is a partition $\mathcal F$ of $M$ into $C^r$ submanifolds of dimension $k$, for some $0<k<d$ and $1\leq r\leq \infty$, such that for every $p\in M$ there exists a continuous \textit{local chart}
\[\Phi:B^k_2 \times B_2^{d-k} \to M \quad (B^m_2 \text{ denotes the ball of radius $2$ in } \; \mathbb R^m)\]
with $\phi(0,0)=p$ and such that the restriction to every horizontal $B_2^k \times \{\eta\}$ is a $C^r$ embedding depending continuously on $\eta$ and whose image is contained in some $\mathcal F$-leaf. The image $\mathfrak B = \Phi(B^k_2 \times B_2^{d-k}) $ is called a \textit{foliation box} and the sets $\Phi(B_2^k \times \{\eta\})$ are called \textit{local leaves} or \textit{plaques} of $\mathcal F$ in the given foliation box.  For any $\xi \in B_2^k$, the set $\mathcal T= \phi(\{\xi\} \times B_2^{d-k})$ is called a \textit{local transversal} to $\mathcal F$. 
The restriction of a local chart $\Phi:B^k_2 \times B_2^{d-k} \to M$ to $\overline{B^k_1} \times \overline{B_1^{d-k}}$ is called a \textit{closed local chart} and the image $\mathfrak C = \Phi(\overline{B^k_1} \times \overline{B_1^{d-k}}) $ is called a \textit{closed foliation box}.

Given a subset $T \subset M$ we say that $T$ is \textit{transversal} to $\mathcal F$ if for every $x\in T$, there exists a foliation box $\mathfrak B$ containing $x$ for which the connected component of $T\cap \mathfrak B$ containing $x$ is a local transversal to $\mathcal F$. 

Along the paper, given a manifold $N$ we will use the notation $\lambda_N$ to denote the volume measure on $N$ induced by its Riemannian structure. We sometimes refer to this measure as being the \textit{Lebesgue measure} of $N$.

\begin{definition}
Given a foliation $\mathcal F$ of $M$ by $C^r$-leaves and $\mathcal T_1$ and $\mathcal T_2$ two local transversals inside a foliation box $\mathfrak B$, the local $\mathcal F$-holonomy between $\mathcal T_1$ and $\mathcal T_2$ is the map $h_{\mathcal T_1,\mathcal T_2}:\mathcal T_1 \to \mathcal T_2$ given by
\[h_{\mathcal T_1,\mathcal T_2}(x) = \Phi(B_1^k \times \{\eta\}) \cap \mathcal T_2, \]
where $\eta = \pi_2 \circ \phi^{-1}(x)$.

Given $T_1, T_2 \subset M$ transversals to $\mathcal F$, for $x\in T_1, y\in T_2$ we say that $h_{x,y}: U_1 \to U_2$ is a $\mathcal F$-holonomy map if 
\begin{itemize}
\item $U_1 \subset T_1$ is a neighborhood of $x$ in $T_1$, $U_2\subset T_2$ is a neighborhood of $x$ in $T_2$;
\item there exists a foliation box $\mathfrak B$ such that $U_1$ and $U_2$ are local transversals in $\mathfrak B$;
\item $h_{x,y}$ is the restriction to $U_1$ of a local $\mathcal F$-holonomy .
\end{itemize}
\end{definition}

%
%

\begin{definition}
We say that a foliation $\mathcal F$ is absolutely continuous if given any pair of local smooth transversals $T_1$ and $T_2$ the holonomy map $h_{T_1,T_2}$ defined by $\mathcal F$ between $T_1$ and $T_2$ is absolutely continuous with respect to the Riemannian measures $\lambda_{T_1}$ and $\lambda_{T_2}$ defined in $T_1$ and $T_2$ respectively.
\end{definition}

Absolute continuity of a foliation is a measure theoretical property which implies, in a certain sense, a version of the Fubini theorem for the foliation.  Let $(X, \mu, \mathcal B)$ where $X$ is a polish metric space, $\mu$ a finite Borel measure on $X$ and $\mathcal B$ the Borel $\sigma$-algebra of $X$. For a partition $\mathcal P$ of $X$ by measurable sets, considering the projection $\pi:X \rightarrow \mathcal P$ we may define the measure space $(\mathcal P, \widehat \mu, \widehat{\mathcal B})$ where $\widehat \mu := \pi_* \mu$
and  $\widehat B \in \widehat{\mathcal B}$ if and only if $\pi^{-1}(\widehat B) \in \mathcal B$.

 Given a partition $\mathcal P$. A family of measures $\{\mu_P\}_{P \in \mathcal P}$ is called a \textit{system of conditional measures} for $\mu$ along $\mathcal P$ if
\begin{itemize}
 \item[i)] for every continuous function $\phi:X \to \mathbb R$ the map $P \mapsto \int \phi \; d\mu_P$ is measurable;
\item[ii)] $\mu_P(P)=1$ for $\widehat \mu$-almost every $P\in \mathcal P$;
\item[iii)] for every continuous function $\phi:X \to \mathbb R$,
 \[\displaystyle{ \int_M \phi \; d\mu = \int_{\mathcal P}\left(\int_P \phi \; d\mu_P \right) d\widehat \mu }.\]
\end{itemize}

If $\{\mu_P\}_{P \in \mathcal P}$ is a system of conditional measures for $\mu$ along $\mathcal P$ we also say that the family $\{\mu_P\}$ \textit{disintegrates} the measure $\mu$ or that it is the \textit{disintegration of $\mu$ along $\mathcal P$}.  

It is a well known fact (see  \cite{EW, Ro52}) that when the disintegration of $\mu$ with respect to a partition $\mathcal P$ exists then it is essentially unique. The disintegration of a measure along a partition does not always exists. We say that a partition $\mathcal P$ is a \textit{measurable partition} (or \textit{countably generated}) with respect to $\mu$ if there exist a family of measurable sets $\{A_i\}_{i \in \mathbb N}$ and a measurable set $F$ of full measure such that 
if $B \in \mathcal P$, then there exists a sequence $\{B_i\}$, where $B_i \in \{A_i, A_i^c \}$ such that $B \cap F = \bigcap_i B_i \cap F$.
For measurable partitions $\mathcal P$ of Polish metric spaces endowed with a finite Borel probability measure $\mu$, there is always a disintegration of $\mu$ along $\mathcal P$ \cite{Ro52}.




\begin{definition}
We say that a foliation $\mathcal F$ is leafwise absolutely continuous, or that volume has Lebesgue disintegration along $\mathcal F$-leaves, if for almost every leaf $L$, the conditional measure $m_L$ of $m$ along the leaf is equivalent to the measure $\lambda_L$ on the leaf. 
\end{definition}

It is a classical fact that absolute continuity implies Lebesgue disintegration of volume (see \cite[Lemma $3.4$]{AVW}) but the opposite is not true.

To prove the next proposition we use a lemma due to Pugh-Viana-Wilkinson.

\begin{lemma}[Pugh-Viana-Wilkinson, \cite{PVW}] \footnote{In \cite{PVW} the hypothesis on $\mathcal T$ is actually that it is a local transverse absolutely continuous foliation. However it is easy to see from their proof that it is enough to assume that $\mathcal T$-holonomies between $\mathcal F$-leaves are absolutely continuous.} \label{lemma:PVW}
If volume has Lebesgue disintegration along a foliation $\mathcal F$, then for every 
transverse local foliation $\mathcal T$ to $\mathcal F$ with the property that $\mathcal T$-holonomies between $\mathcal F$ leaves are absolutely continuous, the local $\mathcal F$-holonomy map $h_{\mathcal F}$ between $m$-almost every pair of $\mathcal T$-leaves is absolutely continuous in the sense that given any local leaf $L_0$ of $\mathcal F$, for $\lambda_{L_0} \times \lambda_{L_0}$ -almost every pair $(x,x')\in L_0\times L_0$ the local $\mathcal F$-holonomy between $\mathcal T(x)$ and $\mathcal T(x')$ is absolutely continuous.
\end{lemma}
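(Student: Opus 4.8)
The plan is to reduce everything to a single foliation box for $\mathcal F$ and to transport the problem, via a convenient trivialization built from the $\mathcal T$-holonomies, into a statement about measures on a product. Fix a foliation box $\mathfrak B$ for $\mathcal F$ and identify the prescribed local $\mathcal F$-leaf with a plaque $L_0\subset\mathfrak B$; after shrinking $\mathfrak B$ we may assume each $\mathcal T$-leaf meets each $\mathcal F$-plaque in exactly one point, and we label the plaques by $\eta\in B^{d-k}$. Using the $\mathcal T$-holonomies $H_{L_0,L_\eta}$ issued from $L_0$ I would build the homeomorphism $\Theta\colon L_0\times B^{d-k}\to\mathfrak B$ sending $(y,\eta)$ to the intersection of the plaque $L_\eta$ with the $\mathcal T$-leaf through $y$. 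This $\Theta$ trivializes \emph{both} foliations at once: the $\mathcal F$-plaques are the slices $L_0\times\{\eta\}$ and the $\mathcal T$-leaves are the slices $\{y\}\times B^{d-k}$, while $\Theta(\cdot,\eta)$ is exactly the $\mathcal T$-holonomy $H_{L_0,L_\eta}\colon L_0\to L_\eta$. The point of this choice is that in these coordinates the local $\mathcal F$-holonomy $h_{y,y'}$ from $\mathcal T(y)$ to $\mathcal T(y')$ becomes the map $(y,\eta)\mapsto(y',\eta)$, i.e. the identity on the transverse coordinate $\eta$. Hence, writing $\theta_y$ for the Riemannian volume $\lambda_{\mathcal T(y)}$ read in the $\eta$-coordinate through $\Theta(y,\cdot)$, proving that $h_{y,y'}$ is absolutely continuous amounts exactly to proving that $\theta_y$ and $\theta_{y'}$ are mutually equivalent measures on $B^{d-k}$.

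Next I would transport $m$ to $\sigma:=\Theta^{*}m$ on $L_0\times B^{d-k}$ and disintegrate it in two ways. Along the $\mathcal F$-plaques $L_0\times\{\eta\}$: the Lebesgue disintegration of $m$ along $\mathcal F$ gives that the conditional on $L_\eta$ is equivalent to $\lambda_{L_\eta}$, and since $\Theta(\cdot,\eta)=H_{L_0,L_\eta}$ is absolutely continuous between the $\mathcal F$-leaves $L_0$ and $L_\eta$ by hypothesis, the conditional read on $L_0$ is equivalent to $\lambda_{L_0}$ for $\hat\mu$-a.e. $\eta$, where $\hat\mu$ denotes the quotient (transverse) measure. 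Thus $\sigma=f(y,\eta)\,\lambda_{L_0}(dy)\,\hat\mu(d\eta)$ for a measurable density $f$ which is strictly positive $(\lambda_{L_0}\otimes\hat\mu)$-a.e.; a Fubini argument on the set $\{f>0\}$ then yields that $f(y,\cdot)>0$ holds $\hat\mu$-a.e. for $\lambda_{L_0}$-a.e. $y$. Re-disintegrating $\sigma$ along the $\mathcal T$-leaves $\{y\}\times B^{d-k}$, the conditional at $\{y\}\times B^{d-k}$ is then proportional to $f(y,\eta)\,\hat\mu(d\eta)$, hence equivalent to the \emph{single fixed} measure $\hat\mu$, for $\lambda_{L_0}$-a.e. $y$. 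Finally, because $\mathcal T$ is a leafwise absolutely continuous foliation --- this is part of the hypothesis on $\mathcal T$ in \cite{PVW} and, by the implication ``absolute continuity $\Rightarrow$ Lebesgue disintegration'' recalled above, holds in all the applications in this paper, where $\mathcal T=\mathcal F^{u}$ --- the conditional of $m$ along $\mathcal T(y)$ agrees, up to equivalence, with $\lambda_{\mathcal T(y)}$; read in the $\eta$-coordinate this says $\theta_y\sim\hat\mu$ for $\lambda_{L_0}$-a.e. $y$. Consequently $\theta_y\sim\hat\mu\sim\theta_{y'}$ for $\lambda_{L_0}\times\lambda_{L_0}$-a.e. $(y,y')$, which by the first paragraph is precisely absolute continuity of $h_{y,y'}$; running the argument also for $h_{y',y}=h_{y,y'}^{-1}$ gives bi-absolute continuity, and patching is immediate since the conclusion is about local holonomies and the prescribed local $\mathcal F$-leaf lies in a single box.

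The crux --- and the step I expect to require the most care --- is the passage, at the end, from the conditional measures of $m$ along the $\mathcal T$-leaves (which we control: they are all equivalent to one fixed measure $\hat\mu$) to the Riemannian volumes $\lambda_{\mathcal T(y)}$ on those leaves; this is exactly where the transverse absolute-continuity structure of $\mathcal T$ is used and cannot be dispensed with, and under the weaker form of the hypothesis isolated in the footnote one first has to check that it still forces $\mathcal T$ to be leafwise absolutely continuous. It is worth emphasizing that we never need $\hat\mu$ to be equivalent to Lebesgue on the transversal: since $\mathcal F$ is only assumed leafwise absolutely continuous, $\hat\mu$ may well be singular, and the whole mechanism of the two-step disintegration is that only the fact that $\hat\mu$ is one measure \emph{independent of the $\mathcal T$-leaf} is used. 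The remaining points are routine but must be tracked: the $\mathcal T$-holonomies are only continuous in the transverse direction, so one works with the measurable densities produced by Rokhlin's theorem rather than with Jacobians; one must check that the two families of ``almost every'' exceptional sets combine correctly via Fubini; and ``absolutely continuous'' is to be read as bi-absolutely continuous, handled by symmetry as indicated.
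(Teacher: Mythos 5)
The paper does not actually prove this lemma: it quotes it from \cite{PVW}, and the footnote merely asserts, without detail, that the original hypothesis of \cite{PVW} --- that $\mathcal T$ is a local transverse \emph{absolutely continuous} foliation --- may be weakened to the requirement that $\mathcal T$-holonomies between $\mathcal F$-leaves are absolutely continuous. So there is no internal proof to compare against, and your reconstruction, via the trivialization $\Theta\colon L_0\times B^{d-k}\to\mathfrak B$ built from $\mathcal T$-holonomies and the two successive disintegrations of $\sigma=\Theta^{*}m$, is indeed the natural route. That part is sound: you correctly reduce the claim to the identity $h_{y,y'}\colon(y,\eta)\mapsto(y',\eta)$ and correctly deduce, from Lebesgue disintegration along $\mathcal F$ plus bi-absolute continuity of $H_{L_0,L_\eta}$, that $\sigma=f(y,\eta)\,\lambda_{L_0}(dy)\,\hat\mu(d\eta)$ with $f>0$ a.e., whence the conditional of $\sigma$ along $\{y\}\times B^{d-k}$ is $\sim\hat\mu$ for $\lambda_{L_0}$-a.e.\ $y$.

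But there is a genuine gap at the very end, and you have in fact put your finger on it. What the two-step disintegration gives is $(h_{y,y'})_{*}m^{\mathcal T}_{y}\sim m^{\mathcal T}_{y'}$, i.e.\ absolute continuity of the $\mathcal F$-holonomy with respect to the \emph{conditional} measures of $m$ along the $\mathcal T$-leaves. The lemma as stated asserts absolute continuity with respect to the \emph{Riemannian} volumes $\lambda_{\mathcal T(y)}$, and the bridge $m^{\mathcal T}_{y}\sim\lambda_{\mathcal T(y)}$ --- equivalently, $\theta_y:=(\Theta(y,\cdot)^{-1})_{*}\lambda_{\mathcal T(y)}\sim\hat\mu$ --- is leafwise absolute continuity of $\mathcal T$. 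That is part of PVW's original hypothesis (strong absolute continuity of $\mathcal T$ implies it, by the cited \cite[Lemma 3.4]{AVW}), but it is \emph{not} a consequence of the weakened hypothesis that $\mathcal T$-holonomies between $\mathcal F$-leaves are absolutely continuous: that hypothesis controls $\sigma$, not the Riemannian volume $\theta_y$ on a $\mathcal T$-leaf, and trying to derive $\theta_y\sim\hat\mu$ from the conclusion of the lemma would be circular, exactly as you note. So your write-up proves the \cite{PVW} version of the statement, not the weakened one claimed in the footnote, and you should make this explicit rather than relying on ``holds in all the applications.'' Either the conclusion must be weakened to absolute continuity with respect to the conditional measures along $\mathcal T$, or leafwise absolute continuity of $\mathcal T$ must be retained as a hypothesis; as stated, the footnote's assertion is not justified by the argument you (correctly) reconstruct.
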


\begin{corollary} \label{cor:AuxtoPVW}
Let $\mathcal F$ be a foliation for which volume has Lebesgue disintegration and $\mathcal T$ be an absolutely continuous transversal foliation to $\mathcal F$. Denote by $\{m^{\mathcal T}_x\}_x$ the disintegration of the volume measure $m$ along $\mathcal T$ and $\nu_x$ the factor measure induced on $\mathcal F(x)$. Then, for almost every $x$ and for $\nu_x$-almost every $y\in \mathcal F(x)$ the $\mathcal F$-holonomy map between $\mathcal T(x)$ and $\mathcal T(x')$ is absolutely continuous.
\end{corollary}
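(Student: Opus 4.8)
The plan is to read this off from Lemma~\ref{lemma:PVW}, the only extra work being to identify the factor measure $\nu_x$ with a measure absolutely continuous with respect to the Riemannian measure on the local leaf $\mathcal F(x)$; here, as in all the applications, $\dim\mathcal F+\dim\mathcal T=\dim M$, so a local leaf of $\mathcal F$ is a full local transversal to $\mathcal T$.

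First I would fix a foliation box $\mathfrak B$ of $\mathcal T$ and a local leaf $L_0=\mathcal F(x_0)$ meeting each $\mathcal T$-plaque of $\mathfrak B$ in exactly one point; being a $C^r$ submanifold transverse to $\mathcal T$, $L_0$ is a smooth local transversal, and the projection $\pi_{\mathcal T}\colon\mathfrak B\to L_0$ along $\mathcal T$-plaques realizes the quotient by the $\mathcal T$-partition, so that the factor measure of the disintegration of $m|_{\mathfrak B}$ along $\mathcal T$ is precisely $\nu_{x_0}$ viewed on $L_0$, and $\nu_x=\nu_{x_0}$ for every $x\in L_0$. Since $\mathcal T$ is absolutely continuous, $\mathcal T$-holonomies between $\mathcal F$-leaves are in particular absolutely continuous, so both hypotheses of Lemma~\ref{lemma:PVW} hold (volume has Lebesgue disintegration along $\mathcal F$ by assumption). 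Applying it with the transverse local foliation $\mathcal T$ and the leaf $L_0$ gives: for $\lambda_{L_0}\times\lambda_{L_0}$-almost every pair $(x,y)\in L_0\times L_0$, the local $\mathcal F$-holonomy between $\mathcal T(x)$ and $\mathcal T(y)$ is absolutely continuous. By Fubini for $\lambda_{L_0}\times\lambda_{L_0}$, this yields, for $\lambda_{L_0}$-almost every $x\in L_0$, that the $\mathcal F$-holonomy $\mathcal T(x)\to\mathcal T(y)$ is absolutely continuous for $\lambda_{L_0}$-almost every $y\in L_0$.

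Next I would upgrade ``$\lambda_{L_0}$-a.e.'' to ``$\nu_x$-a.e.''. Absolute continuity of $\mathcal T$ implies that volume has Lebesgue disintegration along $\mathcal T$, so $m^{\mathcal T}_x\sim\lambda_{\mathcal T(x)}$ for a.e.\ $x$; more precisely, absolute continuity of $\mathcal T$ carries with it the associated Fubini property (cf.\ the discussion after Definition~\ref{definition:conditionalmeasure} and \cite{AVW}), by which the factor measure of the $\mathcal T$-disintegration, realized on the transversal $L_0$, is equivalent to $\lambda_{L_0}$; that is, $\nu_{x_0}\sim\lambda_{L_0}$. Hence every $\lambda_{L_0}$-full subset of $L_0$ is $\nu_{x_0}$-full, and the conclusion of the previous paragraph becomes: for $\nu_{x_0}$-a.e.\ $x\in L_0$, the local $\mathcal F$-holonomy between $\mathcal T(x)$ and $\mathcal T(y)$ is absolutely continuous for $\nu_x$-a.e.\ $y\in\mathcal F(x)=L_0$ (this $y$ being the point denoted $x'$ in the statement). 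Covering $M$ by countably many such boxes then gives the assertion for $m$-almost every $x$.

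The only delicate point is the identification $\nu_{x_0}\sim\lambda_{L_0}$ --- or even just $\nu_{x_0}\ll\lambda_{L_0}$, which is all that is strictly needed: one must know that the quotient measure of the $\mathcal T$-disintegration is absolutely continuous with respect to Lebesgue on the transversal, and this is exactly where one invokes absolute continuity of $\mathcal T$ in its Fubini form rather than the weaker statement that $m$ merely has Lebesgue disintegration along $\mathcal T$-leaves. Everything else is Lemma~\ref{lemma:PVW} together with the classical Fubini argument for product measures.
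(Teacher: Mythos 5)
Your proof is correct in outline and reaches the same conclusion, but you establish the crucial step --- that the factor measure $\nu_x$ charges the full $\lambda_{L_0}$-measure set produced by Lemma~\ref{lemma:PVW} --- by a genuinely different mechanism. The paper never asserts $\nu_{x_0}\sim\lambda_{L_0}$; instead it saturates the full-$\lambda_{L_0}$-measure set $R$ by $\mathcal T$, uses absolute continuity of the $\mathcal T$-holonomies to transport fullness of $R$ to every $\mathcal F$-leaf, invokes leafwise absolute continuity of $\mathcal F$ to conclude $m(\mathcal T(R))=1$, and finally reads $\nu_x(R)=m(\mathcal T(R))=1$ from the definition of the factor measure. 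You instead invoke the ``Fubini form'' of absolute continuity of $\mathcal T$ alone: that the quotient measure of the $\mathcal T$-disintegration, realized on any $C^1$ transversal, is absolutely continuous with respect to the Riemannian measure there. That statement is true --- one proves it by slicing the box with a $C^1$ family of translates of $L_0$ and applying the classical Fubini theorem, using that $\mathcal T$-holonomies carry $\lambda_{L_0}$-null sets to null sets on each translate --- and it buys you a proof of $\nu_{x_0}\ll\lambda_{L_0}$ that does not touch the hypothesis on $\mathcal F$; but it is stronger than what the paper actually proves or cites, and your justification for it is thin: Lemma~3.4 of \cite{AVW} only gives equivalence of the \emph{conditional} measures along $\mathcal T$-plaques, not the factor measure, and the paper's remark after Definition~\ref{definition:conditionalmeasure} is qualitative. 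If you take this route, the slicing argument should be spelled out. A second, minor, compression: your last sentence ``covering $M$ by countably many such boxes then gives the assertion for $m$-almost every $x$'' elides the step the paper states explicitly --- passing from a $\lambda_{L_0}$-full set of good $x$ on each leaf $L_0$ to an $m$-full set of good $x$ in $M$ requires the leafwise absolute continuity of $\mathcal F$, not just a countable cover by boxes.
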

\begin{proof}

Take $L_0=\mathcal F(z)$ arbitrarily. By Lemma \ref{lemma:PVW}  we may take $x\in L_0$ and $R\subset L_0$ such that $\lambda_{L_0}(R)$ has full measure in $L_0$ and for every $y\in R$ the holonomy between $\mathcal T(x)$ and $\mathcal T(y)$ is absolutely continuous. Since $\mathcal T$ is absolutely continuous then for every $\mathcal F$-leaf, $\mathcal F(z')$ we have that $h^{\mathcal T}_{z,z'}(R)$ also has full $\lambda_{\mathcal F(z')}$-measure. In particular, since $\mathcal F$ is leafwise absolutely continuous, the set 
\[\mathcal T(R):= \bigcup_{y\in R}\mathcal T(y)\]
has full $m$-measure.
Now, for the initial $x \in L_0$ fixed, we know that
\[m(\mathcal T(R)) =\nu_x(R) \Rightarrow \nu_x(R)=1.\]
As $x$ can be chosen inside a full $\lambda_{L_0}$-measure inside each central leaf $L_0$, by the leafwise absolute continuity of $\mathcal F$ it follows that, for almost every $x$ and for $\nu_x$-almost every $y\in \mathcal F(x)$ the $\mathcal F$-holonomy map between $\mathcal T(x)$ and $\mathcal T(x')$ is absolutely continuous as we wanted to show. 
\end{proof}

In \cite{PNV} the authors address the problem of determining how the existence of an invariant arc-length system, over a certain one-dimensional foliation $\mathcal F$, impose restrictions on the conditional measures given by a certain invariant ergodic measure. The main result is that there are only three types of possibilities for the conditional measures, which we recall below.

\begin{theorem} \cite[Theorem A]{PNV} \label{PNV1}
	Let $f:M\to M$ be a  homeomorphism over a compact smooth manifold $M$,  $\mathcal F$  be a $f$-invariant one-dimensional continuous foliation of $M$ by $C^1$-submanifolds and $\{l_x\}$ a continuous $\mathcal{F}$-arc length system. If $f$ is ergodic with respect to a $f$-invariant measure $\mu$ then one of the following holds:
		\begin{itemize}
			\item[a)] the disintegration of $\mu$ along  $\mathcal F$ is atomic.
			\item[b)] for almost every $x\in M$, the conditional measure on $\mathcal F(x)$ is equivalent to the measure $\lambda_{x}$ defined on simple arcs of $\mathcal F(x)$ by: \[\lambda_{x}(\gamma([0,1]))=l_{x}(\gamma), \; \text{where} \; \gamma \; \text{is a simple arc.}\]
			\item[c)] for almost every $x\in M$, the conditional measure on $\mathcal F(x)$ is supported in a Cantor subset of $\mathcal F(x)$.
		\end{itemize}
\end{theorem}

As remarked in \cite{PNV}, Theorem \ref{PNV1} applies directly to transitive $C^1$ partially hyperbolic diffeomorphism with one-dimensional topological neutral center direction yielding the following.

	\begin{theorem} \cite[Theorem B]{PNV} \label{PNV}
			Let $f:M \to M$ be a transitive $C^1$ partially hyperbolic diffeomorphism with one-dimensional topological neutral center direction. If $f$ is ergodic with respect to a $f$-invariant measure $\mu$ then one of the following holds:
		\begin{itemize}
			\item[a)] the disintegration of $\mu$ along  $\mathcal F^c$ is atomic.
			\item[b)] for almost every $x\in M$, the conditional measure on $\mathcal F^c(x)$ is equivalent to the measure $\lambda_{x}$ defined on simple arcs of $\mathcal F^c(x)$ by: \[\lambda_{x}(\gamma([0,1]))=l_{x}(\gamma), \; \text{where} \; \gamma \; \text{is a simple arc.}\]
			\item[c)] for almost every $x\in M$, the conditional measure on $\mathcal F^c(x)$ is supported in a Cantor subset of $\mathcal F^c(x)$.
		\end{itemize}	
		\end{theorem}

\section{ Proof of Theorem \ref{lemma:superaux}} \label{sec:onedimensionalcase}

\begin{proof}[Proof of Theorem \ref{lemma:superaux}]
Let us first prove the plaque-continuity of the system of metrics. Considering the order relation along a leaf $L\in \mathcal F$ induced by the orientation of $\mathcal F$, for each $x\in M$ denote
\[x^r:= \sup \{y \in \mathcal F(x): D_x(x,y) \leq r\} , \quad r\geq 0,\]
\[x^r:= \inf \{y \in \mathcal F(x): D_x(x,y) \leq -r\} , \quad r< 0.\]
In particular, if $\mathcal F(x)$ is a leaf of diameter less than $r$ then $x^r  = x^s$, $\forall r\leq s$. 
The point $x^r$ is well defined by the equicontinuity of $f$ along $\mathcal F$. 
We also consider $\phi:\mathbb R \times M \to M$ the flow along the center foliation induced by $d^{\mathcal F}$ and the orientation of $\mathcal F$. For $s \in \mathbb R$ and $y=\phi_s(x)$,  denote 
\[[x,y] = \phi([0,s] \times \{x\}), \quad \text{if} \quad  s\geq 0 \quad \text{and} \quad [x,y] = \phi([s,0] \times \{x\}), \quad \text{ otherwise. }\]

\noindent \textbf{Claim.} For each $r\in \mathbb R$ fixed, the map $g:M \to \mathbb R$ given by $x\mapsto x^r$ is continuous.
\begin{proof}[proof of Claim.] 

Let $x\in M$ and consider $(n_k) \subset \mathbb Z$ such that
\[r-k^{-1} < d^{\mathcal F}(f^{n_k}(x), f^{n_k}(x^r)) \leq r, \quad \forall k\in \mathbb N.\]
Observe that the sequence $(n_k)$ may be constant.

Consider $\mathfrak C$ be a closed foliation box associated to a closed local chart $\psi:  [0,1]^m \to \mathfrak C$ such that $\psi(\{1/2\}^{d-1} \times [0,1]) = [x,x^r]$, with $\psi( \{1/2\}^{d-1}\times\{0\})=x$ and $\psi(\{1,2\}^{d-1}\times \{1\})=x^r$.

Let $(y_m)\subset \mathfrak C$ with $y_m \rightarrow x$ and let $\widetilde{y_m} := \psi( (\pi_{d-1}\circ \psi^{-1}(y_m)) \times \{1\} )$, where $\pi_{d-1}: [0,1]^d \to [0,1]^{d-1}$ is the projection onto the $d-1$ first coordinates. In other words, $\widetilde{y_m}$ is the intersection of the plaque of $y_m$ in $\mathfrak C$ with the upper cap $\psi([0,1]^{d-1}\times \{1\})$ as showed in figure \ref{fig:ypsilon}. 

\begin{figure}[htb!] \label{fig:ypsilon}
\begin{center}
\includegraphics[height=5cm]{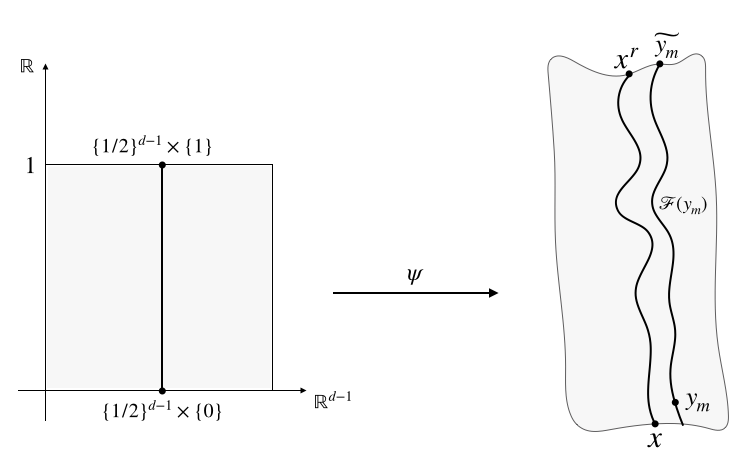}
\caption{$\widetilde{y_m}$ is the intersection of the plaque of $y_m$ in $\mathfrak C$ with the upper cap $\psi([0,1]^{d-1}\times \{1\})$.}
\end{center}
\end{figure}

It is clear that $\widetilde{y_m} \rightarrow x^r$ by the continuity of $\mathcal F$. In particular, for each $k\in \mathbb N$, by the continuity of $f^{n_k}$ and of $\mathcal F$, there exists $m_k$ for which:
\[d^{\mathcal F}(f^{n_k}(x), f^{n_k}(x^r)) - k^{-1} < d^{\mathcal F}(f^{n_k}(y_{m_k}), f^{n_k}(\widetilde{y_{m_k}})) < d^{\mathcal F}(f^{n_k}(x), f^{n_k}(x^r)) + k^{-1} \Rightarrow \]
\begin{equation}\label{eq:mk}
r-2k^{-1} < d^{\mathcal F}(f^{n_k}(y_{m_k}), f^{n_k}(\widetilde{y_{m_k}})) < r+k^{-1}.\end{equation}

Assume without loss of generality that $\widetilde{y_{m_k}}$ is between $y_{m_k}$ and $y^r_{m_k}$, the other case is analogous. We claim that given any $\varepsilon>0$ we can take $k$ large enough so that 
\begin{equation}\label{eq:epsilon2}
d^{\mathcal F}(\widetilde{y_{m_k}}, y^r_{m_k}) < \frac{\varepsilon}{2}.\end{equation}
Indeed, assume \eqref{eq:epsilon2} is false. Then, for a certain $\varepsilon>0$ we have $d^{\mathcal F}(\widetilde{y_{m_k}}, y^r_{m_k}) \geq \varepsilon/2$ for all $k \in \mathbb N$. By the topological neutral center property of $f$, there exists $\delta>0$ for which
\[d^{\mathcal F}(f^j(\widetilde{y_{m_k}}), f^j(y^r_{m_k})) \geq \delta, \quad \forall j\in \mathbb Z.\]
But then,
\begin{align*}
d^{\mathcal F}(f^j(y_{m_k}), f^j(y^r_{m_k})) &= d^{\mathcal F}(f^j(y_{m_k}), f^j(\widetilde{y_{m_k}}))+d^{\mathcal F}(f^j(\widetilde{y_{m_k}}), f^j(y^r_{m_k})) \\
& \geq \delta +  d^{\mathcal F}(f^j(y_{m_k}), f^j(\widetilde{y_{m_k}})), \quad \forall j\in \mathbb Z.\end{align*}
In particular by \eqref{eq:mk} we have
\[r\geq d^{\mathcal F}(f^{m_k}(y_{m_k}), f^{m_k}(y^r_{m_k})) > \delta + r-2k^{-1},\]
which yields an absurd when we take $k \rightarrow \infty$. Therefore \eqref{eq:epsilon2} holds. Now, since $\widetilde{y_{m_k}} \rightarrow x^r$, by \eqref{eq:epsilon2} we conclude that for $k$ large enough we have $d(x^r, y^r_{m_k}) < \varepsilon$. That is, the map $x \mapsto x^r$ is continuous. 
\end{proof}

 Consequently, since $\mathcal F$ is a continuous foliation we have that $g(x):=d^{\mathcal F}(x,x^r)$ is plaque-continuous for every $r\in \mathbb R$ fixed. Since $f$ is equicontinuous along $\mathcal F$, it is not difficult to see that, for each $x\in M$ fixed, the map $r \mapsto x^r$ is also continuous. Therefore $r\mapsto d^{\mathcal F}(x,x^r)$ is plaque-continuous for each $x\in M$.

 Now, inside a local chart $U$,  let $x_n \rightarrow x$ and $y_n \rightarrow y$ with $x_n,y_n$ in the same plaque and $x,y$ in the same plaque. For each $n\in \mathbb N$ we may write $y_n = x_n^{r_n}$, with $r_n$ being the infimum of such possible values. By the definition of $D_x$ it is clear that
 \[\liminf D_{x_n}(x_n,y_n) \geq D_x(x,y).\]
 Now assume that $\limsup D_{x_n}(x_n,y_n) > D_x(x,y)$. Then, by taking a subsequence if necessary we may assume that $\lim_{n\rightarrow \infty} D_{x_n}(x_n,y_n) \geq D_x(x,y) +\delta_2$, for some $\delta_2>0$. Thus, there exists a sequence $l_n$ with 
 \[d^{\mathcal F}(f^{l_n}(x_n),f^{l_n}(y_n)) \geq r +\delta_2/2,\]
 where $r:=D_x(x,y)$. In particular, the point $x_n^r$ must be in $[x_n , y_n]$ and
  \[r+ d^{\mathcal F}(f^{l_n}(x^r_n),f^{l_n}(y_n)) \geq  d^{\mathcal F}(f^{l_n}(x_n),f^{l_n}(x_n^r)) + d^{\mathcal F}(f^{l_n}(x^r_n),f^{l_n}(y_n)) = d^{\mathcal F}(f^{l_n}(x_n),f^{l_n}(y_n)) \geq r +\delta_2/2 \]
  \[\Rightarrow d^{\mathcal F}(f^{l_n}(x^r_n),f^{l_n}(y_n)) \geq \delta_2/2.\]
 By equicontinuity for some $\delta_3>0$ we have
 \[d^{\mathcal F}(f^{l}(x_n^r),f^{l}(y_n)) \geq \delta_3, \quad \forall \; l \in \mathbb Z, \quad \forall n\geq n_0.\]
 Let $\varepsilon_3>0$ be such that,
 \[x\in M, y\in \mathcal F(x), \; d^{\mathcal F}(x,y)<\varepsilon_3 \Rightarrow d^{\mathcal F}(f^k(x),f^k(y)) < \delta_3, \; \forall k \in \mathbb Z.\]
 Since $y_n\rightarrow y$ and $x_n^r \rightarrow x^r=y$, by continuity of $w\mapsto w^r$, there exists $k_0\in \mathbb Z$ such that $k\geq k_0$ implies 
 \[d(y_k,x_k^r) < \varepsilon_3 \Rightarrow d(f^j(y_k),f^j(x_k^r)) < \delta_3, \quad \forall j,\]
 which yields an absurd. That is, we have proved that
 \[\limsup D_{x_n}(x_n,y_n) \leq D_x(x,y) \leq \liminf D_{x_n}(x_n,y_n),\]
 from where it follows that $\lim_{n\rightarrow \infty} D_{x_n}(x_n,y_n) = D_x(x,y) $ as we wanted to show.

%
\end{proof}



\section{Proof of Theorem \ref{lemma:superaux2}}

\begin{proof}[Proof of Theorem \ref{lemma:superaux2}]
Consider $\mathcal U$ a finite cover of $M$ by open charts of $\mathcal F$ such that, restricted to any $U\in \mathcal U$, for $x\in U$ the maps $\rho(x,\cdot )$ and $d_x(x,\cdot)$ are continuous on the plaque $\mathcal F|U(x)$.

For any $x\in M$, there exists $r_x>0$ small enough, so that the map $c(x,r)$ given by
\[c(x,r):= \frac{\rho_x(x_r^-,x_r^+)}{2r}, \quad \text{where} \; (x_r^-,x_r^+):=B_{d_x}(x,r),\]
is well defined for $r< r_x$. More precisely, $r_x$ is given by
\[r_x = \min_{U\in \mathcal U} \{d_x(x,\partial U) : x\in U\; \}.\]
By plaque continuity of the metric system it follows that $x\mapsto r_x$ is continuous, therefore uniformly bounded away from zero, say $0<r_0 \leq r_x$ for all $x\in M$. We now consider the restriction $c: M\times [0,r_0) \to (0,\infty)$. Observe that $(x,r) \mapsto c(x,r)$ is continuous and $f$-invariant on $x$, that is, $c(f(x),r)=c(x,r)$. In particular, for each $r>0$ there exists $c(r)>0$ and a set of full measure $\mathcal P_r$ with,
\[x\in \mathcal P_r \Rightarrow c(x,r)=c(r).\]
By continuity of $c(x,r)$ and the fact that $\mu$ has full support, it follows that $\mathcal P_r$ is dense on $M$, thus $c(x,r)=c(r)$ for all $x\in M$. By continuity of $c(x,r)$, it follows that $c(r)$ is continuous on $(0,+\infty)$, in particular, it is upper bounded on $[r_0/4,r_0/2]$ by a constant $C>0$.

Now, for any $r_0/2>r>0$, since $d_x$ is additive, given any $x\in M$ we may write
\[B_{d_x}(x,2r) = (x_{2r}^-,x]\cup(x,x_{2r}^-) = B_{d_x}(y_1,r)\cup \{x\} \cup B_{d_x}(y_2,r), \]
for certain $y_1 \in  (x_{2r}^-,x)$, $y_2 \in (x,x_{2r}^-) $.
In particular,
\[c(2r)= \frac{\rho_x(x_{2r}^-,x_{2r}^+)}{4r} \leq  \frac{\rho_x(x_{2r}^-,x)}{4r} + \frac{\rho_x(x,x_{2r}^-)}{4r} =\frac{1}{2}c(r) + \frac{1}{2}c(r) = c(r).\]
Thus, 
\begin{equation}\label{eq:supremumC}
\sup_{r\in (0,r_0/2]}c(r) \leq \sup_{r\in [r_0/4, \;r_0/2]}c(r) \leq C.\end{equation}

Given any $x \in M$ and $y\in \mathcal F(x)$, we may take $x_0=x < x_1 < x_2 < \ldots < x_n=y$ such that $d_x(x_i,x_{i+1}) < r_0/2$ for all $0\leq i \leq n-1$ and with $x_i, x_{i+1} \in U_i$ for some $U_i\in \mathcal U$. Thus, by additivity of $d_x$ and \eqref{eq:supremumC} we have
\[\rho_x(x,y) \leq \sum_{i=0}^{n-1} \rho_x(x_i,x_{i+1}) \leq \sum_{i=0}^{n-1} C\cdot d_x(x_i,x_{i+1}) = C\cdot d_x(x,y), \]
which proves the first part.

For the second part, consider the continuous system of distances $\{D_x\}_{x\in M}$ constructed on Theorem \ref{lemma:superaux}. By what we have proved above, there exists $C>0$ such that
\begin{equation} \label{eq:metricx}
D_x(x,y) \leq C\cdot d_x(x,y) \Rightarrow d^{\mathcal F}(x,y) \leq C\cdot d_x(x,y), \quad \forall \; x\in M, y\in \mathcal F(x).\end{equation}
If, for $\mu$ almost every $x\in M$ we have $\mu_x \sim\lambda_x$ then if $\lambda_x(E)=0$, for every $\varepsilon>0$ there exists a cover of $E$ by open $d_x$-balls $E\subset \bigcup_{j \in \mathbb N}B_{d_x}(a_j,r_j)$,
with $\sum_{j\in \mathbb N} r_j < C^{-1}\cdot \varepsilon$. But by \eqref{eq:metricx} we have $B_{d_x}(a_j,r_j) \subset B_{d^{\mathcal F}}(a_j,C\cdot r_j)$, which implies that
\[E\subset \bigcup_{j \in \mathbb N}B_{d^{\mathcal F}}(a_j,C\cdot r_j) \quad \text{with} \quad \sum_{j\in \mathbb N} C\cdot r_j < \varepsilon.\]
In particular $\lambda_{\mathcal F(x)}(E) =0$ as we wanted. Therefore $\lambda_{\mathcal F(x)} << \lambda_x$, i.e, $\mathcal F$ is lower leafwise absolutely continuous with respect to $\mu$. As $\mu$ is $f$-invariant and ergodic, it follows by \cite[Lemma 3.14]{AVWII}\footnote{Without the presence of a $f$-invariant ergodic measure $\mu$, lower leafwise absolute continuity is not, in general, equivalent to leafwise absolute continuity (see \cite{VianaYang} for some examples). 
}  that $\mathcal F$ is leafwise absolutely continuous.
\end{proof}

\section{Proof of Theorem \ref{theorem:1dimens}}
\begin{proof}[Proof of Theorem \ref{theorem:1dimens}]
If the disintegration of $\mu$ is not atomic nor the conditional measures are supported on a Cantor set, then by Theorem \ref{PNV} the conditional measures are equivalent to the measures $\lambda_x$ on $\mathcal F(x)$, induced by $\{d_x\}$. In this case, Theorem \ref{lemma:superaux2} implies that $\mathcal F^c$ is leafwise absolutely continuous. Now, from \cite[Lemma 3.16]{AVWII} it follows that $\mathcal F^{cs}$ and $\mathcal F^{cu}$ are leafwise absolutely continuous. Therefore, by Theorem \ref{theo:KBGeneral} we conclude that $f$ is Bernoulli. 
\end{proof}

\section{Proof of Theorem \ref{theorem:IP}}

\begin{proof}[Proof of Theorem \ref{theorem:IP}]
As in \cite{PNV}, let $\mathcal U$ be an open cover of $M$ by local charts and let $\mathfrak r$ small enough so that for every $x\in M$, there exists $U\in \mathcal U$ with $B_{d_x}(x,\mathfrak r) \subset U$. For more details on how to construct such number see \cite[Proposition 3.9]{PNV}. For a full measure subset $\mathcal Z \subset M$ we may define $\mu_x$ on $\mathcal F(x)$ to be the conditional measure of $\mu$, along $\mathcal F(x)$, normalized so that $\mu_x(B_{d_x}(x,\mathfrak r)) = 1$, for all $x\in \mathcal Z$ (c.f. \cite[Section 4]{PNV}).

For each $x\in \mathcal Z$ and $r\in \mathbb R$ define
\[J(x,r):=\frac{d\mu_x}{d\lambda_x}(x_r^+).\]
It is easy to see that $J(f(x),r)=J(x,r)$, since $f_*\mu_x=\mu_{f(x)}$, $f_*\lambda_x=\lambda_{f(x)}$ and $f(x^+_r) = f(x)^+_r$. Therefore, for a full measure subset $\mathcal M_r$ we have:
\[J(x,r) = J(r).\]
Let $\mathcal N := \bigcap_{r\in \mathbb Q} \mathcal M_r$. By \cite{PNV}, for $x\in \mathcal N$ we have
\[\frac{d\mu^U_x}{d\lambda_x}(y) = \mu^U_x(B_{d_x}(y,\mathfrak r)) \cdot \Delta,\]
where $\Delta$ is a constant, therefore
\[\frac{d\mu_x}{d\lambda_x}(y) = \frac{\mu^U_x(B_{d_x}(y,\mathfrak r))}{\mu^U_x(B_{d_x}(x,\mathfrak r))} \cdot \Delta.\]
Replacing $y$ for $x^+_r$ we have:
\[J(x,r) = \frac{\mu^U_x(B_{d_x}(x^+_r,\mathfrak r))}{\mu^U_x(B_{d_x}(x,\mathfrak r))} \cdot \Delta. \]

The map $x\mapsto \mu^U_x(B_{d_x}(x,\mathfrak r))$ is continuous restricted to plaques of $\mathcal F|U$ and, analogously, the map $(x,r) \mapsto \mu^U_x(B_{d_x}(x^+_r,\mathfrak r)$ is also continuous on the first coordinate restricted to plaques and on the second coordinate restricted to the condition $x^+_r\in U$.
By the continuity of $J(x,r)$ at the first coordinate restricted to plaques, we conclude that for any plaque $L$ intersecting $\mathcal N$ in a full measure set we have $J(x,r)=J(r)$, for every $x\in L$ and any $r\in \mathbb Q$. Therefore, we may assume $\mathcal N$ to be plaque saturated.
Now, for each $x\in \mathcal N$ and $s\in \mathbb R$, given any sequence of rationals $r_n \rightarrow s$ we have:
\[J(x,s) = \lim_{n\rightarrow \infty} J(x,r_n) = \lim_{n\rightarrow \infty} J(r_n).\]
Hence, we conclude that there is a continuous function $J:\mathbb R \to \mathbb R$ such that for every $x\in \mathcal N$,
\[J(x,r) = J(r), \quad r\in \mathbb R.\]


Now, for $x\in \mathcal Z$, consider a neighborhood $\mathcal B$ of $x$ given by the following:
\begin{equation}\label{eq:adaptedchart}
\mathcal B = \bigcup_{y\in W^s_{\delta}(x), z\in W^u_{\delta}(y)} h^u_{y,z}(B_{d_y}(y,\mathfrak r))\end{equation}
where
\[\mathcal B^s = \left(\bigcup_{z\in W^s_{\delta}(x)} h^s_{x,z}(B_{d_x}(x,\mathfrak r)) \right).\]
For each $L \in \mathcal F^c|\mathcal B$ consider $x_L \in L$ such that $L=B_{d_L}(x_L,\mathfrak r)$. On $L$ define:
\[d\widetilde{\mu}_L = \mathfrak J\cdot d\lambda_L,\]
where $\mathfrak J(y):=J(s)$ where $y=(x_L)^+_s$, $s\in (-\mathfrak r, \mathfrak r)$.
In particular  $\widetilde{\mu}_L = \mu_L$ whenever $L\cap \mathcal N \ne \emptyset$, which is the case for almost every plaque in $\mathcal B$. Therefore $\{\widetilde{\mu}_L\}_{L\in \mathcal B}$ is a disintegration of $\mu(\cdot | \mathcal B)$.

It is also clear that this system of measures is continuous, since $\lambda_L$ is continuous and the density function does not depend on the center plaque. Therefore $\{\widetilde{\mu}_L\}_{L\in \mathcal B}$  is a continuous disintegration of $\mu(\cdot | \mathcal B)$. We are left to prove that this system is $u$-invariant ($s$-invariance follows analogously).

Let $x\in \mathcal B$ and $y\in W^u_{\delta}(x)$. The continuous invariant metric system $\{d_x\}_{x\in M}$ is invariant by unstable and stable holonomies, in particular $h^u_{x,y}(x_r^+) = y_r^+$, for every $x\in M$, $y\in \mathcal F^u(x)$ and $r \in \mathbb R$. In particular $h^u_{x,y}((x,x_r^+)) = (y,y_r^+)$.
Therefore,
\begin{align*}
\widetilde{\mu}_{\mathcal F(x)}(x,x_r^+) &= \int_0^r J(s) d\lambda_x(s) = \int_0^r J(s) d\lambda_y(\pi^u_{x,y}(s)) \\
& = \int_0^r J(h^u_{y,x}(t)) d\lambda_y(t) = \int_0^r J(t) d\lambda_y(t) \\
& = \widetilde{\mu}_{\mathcal F(y)}(h^u_{x,y}(x,x_r^+)).\end{align*}
Since this holds for every $r\in \mathbb R$, we conclude that the system is indeed $u$-invariant as we wanted to show.
\end{proof}

\section{Proof of Theorem \ref{theorem:centerregular}}

First assume that the center conditionals of $\mu$ are not atomic nor supported on a Cantor set, that is, they are fully supported and equivalent to $\lambda_x$. Also, by Theorem \ref{theorem:IP} we may take a finite cover $\mathcal U$ of $M$ by local charts restricted to which there is a continuous disintegration of $\mu$ along plaques of $\mathcal F^c$ which is invariant by stable and unstable holonomies. Let $U\in \mathcal U$ be any of this charts and consider $\{\mu^U_x\}_{x\in U}$ this mentioned continuous disintegration of $\mu(\cdot |U)$ along $\mathcal F^c|U$. Once again we denote by $\prec$ the order relation on $\mathcal F^c|U$ induced by its orientation.

Consider now the map $\psi^U: R^U \to M$ given implicitly by
\begin{equation}\label{eq:floow}
\mu^U_x(x,\psi^U(t,x)) = |t|, \quad x\in L \in \mathcal F^c,\end{equation}
where $R_U = \{(x,t) \in U\times \mathbb R: \mu^U_x(\{y \in \mathcal F^c|U(x): x\prec y\}) \geq |t| \}$.
and such that $\psi(t,\cdot)$ preserves the orientation of $\mathcal F^c$ for every $t$ fixed. 

\begin{lemma}
The map $\psi^U$ is a continuous flow.
\end{lemma}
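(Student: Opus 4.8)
The plan is to verify three things in turn — that $\psi$ in \eqref{eq:floow} is well defined, that it obeys the group law $\psi(s+t,\cdot)=\psi(s,\cdot)\circ\psi(t,\cdot)$ with $\psi(0,\cdot)=\mathrm{id}$, and that it is continuous — only the last being substantial. For well-definedness and the group law, we first record that by Lemma \ref{lemma:concluderegular} each $\rho_L$ is equivalent to $\lambda_L$, hence non-atomic and fully supported on $L$; moreover the supremum defining $d_x$ contains the term $n=0$, so $d_x\ge d_c$ on $\mathcal F^c(x)$ and therefore $\rho_L([a,b])\ge\lambda_L([a,b])$ for every arc $[a,b]\subset L$. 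Hence a non-compact center leaf has infinite $\rho_L$-length on either side of any of its points (a compact center leaf is a circle, around which $\psi(t,\cdot)$ will simply wind), so, writing $\phi$ for the arc-length parametrization of $\mathcal F^c$ compatible with the orientation, the map $s\mapsto\rho_L([x,\phi_s(x)])$ is a continuous strictly increasing bijection onto $\mathbb R$ (resp.\ onto $\mathbb R/(\text{leaf length})$). So for each $(t,x)$ there is a unique point $\psi(t,x)\in\mathcal F^c(x)$ on the prescribed side solving \eqref{eq:floow}; that $\psi(0,x)=x$ is immediate, and additivity of $\rho_L$ along arcs together with the orientation convention gives, after a short case analysis on the signs of $s$ and $t$, the identity $\psi(t,\psi(s,x))=\psi(s+t,x)$ via the uniqueness just established. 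Thus $\psi$ is an $\mathbb R$-action on $M$ preserving every center leaf, and it remains to prove continuity.

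For continuity, it is enough to show $\psi(t,\cdot)$ is continuous for each fixed $t$; joint continuity then follows from the triangle inequality and the locally uniform continuity of $s\mapsto\psi(s,x)$, which holds because $s\mapsto\rho_L([x,\phi_s(x)])$ is continuous and strictly monotone. We would exhibit $\psi$ as a time change of the arc-length center flow: $\psi(t,x)=\phi(\tau(t,x),x)$, where $\tau(t,x)$ is the unique real with the sign of $t$ at which $\rho_x([x,\phi_s(x)])=|t|$. Since $\mathcal F^c$ is a continuous foliation with $C^1$ leaves, $\phi$ is continuous, so it suffices to prove $\tau$ continuous, and by strict monotonicity in $s$ this reduces to the continuity of
\[(s,x)\longmapsto\rho_x\big([x,\phi_s(x)]\big).\]
Here the continuity of the system of metrics $\{d_x\}_{x\in M}$ supplied by Theorem \ref{lemma:superaux} is used: for a one-dimensional leaf the Hausdorff measure $\rho_x$ of a sub-arc is determined, up to a universal constant and via partitions, by the restriction of $d_x$ to that arc, so the continuity of $(x,r)\mapsto x^r$ proved in Claim 1 of Theorem \ref{lemma:superaux} propagates to continuous dependence of $\rho_x$ on $x$; concretely, we would cover the compact arc $[x,\phi_s(x)]$ by finitely many foliation boxes, rerun the Claim 1 argument (leafwise equicontinuity of $f$, continuity of $f^n$ and of $\mathcal F^c$) box by box on the finitely many plaques involved, and, when $\mathcal F^c(x)$ is compact, keep track of the winding. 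This yields continuity of $(s,x)\mapsto\rho_x([x,\phi_s(x)])$, hence of $\tau$, hence of $\psi$.

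The main obstacle is precisely this last point: the continuous dependence of the Hausdorff measures $\rho_x$ — equivalently of the metrics $d_L$ — on the base point $x$. Although it follows the template of Claim 1 in the proof of Theorem \ref{lemma:superaux}, care is needed because $\psi(t,x)$ typically leaves any single foliation box, so the estimate must be propagated along a finite chain of boxes covering the arc from $x$ to $\psi(t,x)$, with the chain and the constants chosen uniformly over base points near $x$; compact center leaves, where the flow winds around, need the (routine) extra bookkeeping of covering multiplicity.
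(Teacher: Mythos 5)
Your proposal follows the same route as the paper's own proof: the flow property is verified by additivity of $\rho_L$ along arcs together with uniqueness of the point at prescribed $\rho_L$-distance, and continuity is reduced to continuity of the system $\{d_L\}$ (equivalently $\{\rho_L\}$). You are more explicit than the paper about well-definedness and about the reduction to joint continuity of $(s,x)\mapsto\rho_x\bigl([x,\phi_s(x)]\bigr)$, and you correctly flag the one point that the paper passes over silently: while Theorem \ref{lemma:superaux} establishes continuity of the metric system $\{d_x\}$, the paper simply asserts continuity of the derived system $\{d_L\}$ of intrinsic length metrics without argument, whereas the passage from continuity of $d_x$ to continuity of the induced one-dimensional Hausdorff measures $\rho_L$ does require the kind of chain-of-foliation-boxes bookkeeping you describe. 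So the proposal is correct, in the same spirit as the paper's one-line continuity claim, and in fact somewhat more careful about exactly where the continuity supplied by Theorem \ref{lemma:superaux} enters.
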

\begin{proof}
Consider $t,s \in \mathbb R$. Let $x,y,z \in L$ be three points in a center leaf $L$ such that
\[b=\psi_t(a),\quad  c=\psi_s(b).\]
Consider $t,s>0$, the other cases are analogous;
By definition, $t=\mu^U_x(x,\psi(t,x))$, $s=\mu^U_x(\psi(t,x),\psi(s,\phi(t,x)))$ and $t+s= \mu^U_x(x,\psi(t+s,x))$. Therefore, since $\mu^U_x$ is equivalent to $\lambda_x$ we have
\[(x,\psi(t+s,x)) = (x,\psi(t,x)) \cup (\psi(t,x),\psi(s,\phi(t,x))) \Rightarrow \psi(s,\phi(t,x))=\psi(t+s,x).\]
Continuity of $\psi$ follows straight forward from the continuity of $\{\mu^U_x\}_{x\in U}$ .
%
%
\end{proof}

Since the system $\{\mu^U_x\}_{x\in U}$ is continuous and invariant by stable/unstable holonomies inside $U$, it follows that $\psi$ is also invariant by the respective holonomies, i.e,
\begin{equation}\label{eq:holo}
h^s_{x,x'}\circ \psi_t(x) =  \psi_t\circ h^s_{x,x'}(x), \quad \text{and} \quad h^u_{x,x'}\circ \psi_t(x) =  \psi_t\circ h^u_{x,x'}(x),\end{equation}
whenever the composition is well defined.

\begin{lemma}\label{lemma:preservesvolume}
The flow $\psi^U_t$ preserves the measure $\mu$.\footnote{Recall that $\psi_t$ is not defined on the whole product space $U \times \mathbb R$, so this property is restricted to $U$. }
\end{lemma}
\begin{proof}
Note that by definition, $\mu^U_x$ is invariant by $\psi_t$. Now, let $B\subset U$ and $t\in \mathbb R$ small enough such that $\psi_t(B)$ is well defined. Then, 
\[\mu^U(\psi_t(B)) = \int \mu^U_x(\psi_t(B) \cap \mathcal F^c|U(x)) d\nu = \int \mu^U_x(B) d\nu= \mu^U(B).\]
%
that is, $\psi_t$ preserves the measure $\mu$.
\end{proof}

Next we prove that the flow $\psi_t$ is $C^{\infty}$ using an argument similar to the argument used in \cite{AVW}, although in our case, since we do not obtain a disintegration of $\mu$ which is globally invariant, we need to use the local accessibility hypothesis in place of accessibility. The proof is obtained from an application of Journ\'e Lemma (Theorem \ref{theo:JN}) after one has concluded that $\psi_t$ is $C^{\infty}$ along $\mathcal F^c$, $\mathcal F^s$ and $\mathcal F^u$ plaques. 

%

\begin{theorem}\label{theo:JN} \cite{Journe}
Let $\mathcal F_1$ and $\mathcal F_2$ be transverse foliations of a manifold $M$ whose leaves are uniformly $C^{\infty}$. Let $\eta:M\to \mathbb R$ be any continuous function such that the restriction of $\eta$ to the leaves of $\mathcal F_1$ is uniformly $C^{\infty}$ and the restriction of $\eta$ to the leaves of $\mathcal F_2$ is uniformly $C^{\infty}$. Then $\eta$ is uniformly $C^{\infty}$.
\end{theorem}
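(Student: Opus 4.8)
The plan is to treat Theorem~\ref{theo:JN} as a local statement and to reduce it, by induction on the order of differentiability, to a one--variable ``converse Taylor'' principle, following the scheme of \cite{Journe}. Since uniform $C^\infty$-regularity of $\eta$ can be tested in local charts, I would first straighten $\mathcal F_1$: near any point one may assume $M=\mathbb R^p\times\mathbb R^q$, with $\mathcal F_1$ the foliation by horizontal slices $\mathbb R^p\times\{y\}$ and $\mathcal F_2$ a transverse foliation whose leaves are uniformly $C^\infty$ graphs; uniform transversality then provides a uniform lower bound on the angle between the two tangent distributions, hence a bi-Lipschitz comparison between displacement along an $\mathcal F_2$-leaf and the corresponding displacement of the $\mathbb R^p\times\mathbb R^q$-coordinates. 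The goal becomes: every partial derivative $\partial^\alpha\eta$ exists and is continuous, with a bound depending only on $|\alpha|$ and on the prescribed leafwise bounds.

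The tool is the classical converse of Taylor's theorem in one variable: if $g$ is continuous on an interval and, at each point $x$, $g(x+h)=\sum_{j=0}^{r}a_j(x)h^j+R(x,h)$ with $R(x,h)=o(h^r)$ uniformly in $x$ and each $a_j$ continuous, then $g\in C^r$ and $a_j=g^{(j)}/j!$. I would run an induction on $r$, the case $r=0$ being the continuity hypothesis on $\eta$. Assuming $\eta$ is already known to be uniformly $C^{r-1}$, all derivatives $\partial^\beta\eta$ with $|\beta|\le r-1$ are globally continuous; one needs the order-$r$ ones. Along each $\mathcal F_1$-leaf the horizontal derivatives of order $r$ exist and are continuous on the leaf, and applying the inductive (lower order) case of the lemma to the derivatives of order $r-1$ shows they glue to globally continuous functions on $M$. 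Combining this horizontal Taylor data with the uniform $C^\infty$ behaviour of $\eta$ along $\mathcal F_2$-leaves, and using transversality to convert expansions along $\mathcal F_2$-leaves into expansions in the ambient coordinates, one obtains at every point a genuine order-$r$ Taylor polynomial of $\eta$ in all directions, with continuous coefficients and a uniformly controlled remainder. The converse Taylor principle, applied direction by direction, then yields $\eta\in C^r$ with bounds depending only on $r$ and the leafwise data; letting $r\to\infty$ gives the statement.

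The step I expect to be the main obstacle is the uniform control of the remainder $R$ when transporting Taylor data between the two foliations. Because the leaves of $\mathcal F_2$ are curved relative to the straightened $\mathcal F_1$, an estimate $R=o(h^r)$ obtained along $\mathcal F_2$-leaves must be re-expressed in the ambient $\mathbb R^p\times\mathbb R^q$ coordinates, and for this the transversality --- and hence the comparison between leafwise and ambient distances, and the bounds on the change of variables straightening $\mathcal F_1$ --- must be uniform over $M$; this is precisely where compactness, or the uniformity built into the hypotheses, is used. A secondary subtlety, and the reason the induction is organized on the order $r$ rather than attacking the $C^\infty$ statement at once, is that at each stage the leafwise derivatives are a priori defined only leaf by leaf, so one must first check they patch into globally continuous functions before feeding them into the next stage; this is handled by invoking the already-established lower-order instance of the lemma.
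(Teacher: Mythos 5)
The paper cites Journ\'e's lemma \cite{Journe} as a black box and supplies no proof of its own, so there is no in-paper argument to compare against; what can be evaluated is whether your sketch would establish the statement, and it contains a genuine gap at the central step. After straightening $\mathcal F_1$ to $\{y=\text{const}\}$, the hypotheses give you (i) all pure $x$-derivatives $\partial_x^i\eta$, continuous in $(x,y)$, and (ii) the derivatives of $\eta$ along the curved $\mathcal F_2$-leaves. At order one, transversality lets you solve a single linear equation for $\partial_y\eta$, which is then continuous. But already at order two, differentiating twice along an $\mathcal F_2$-leaf produces a \emph{single} scalar relation in which $\partial_x\partial_y\eta$ and $\partial_y^2\eta$ appear together, and there is no second direction to separate them; the assertion that one ``obtains at every point a genuine order-$r$ Taylor polynomial of $\eta$ in all directions, with continuous coefficients'' is therefore exactly what must be proved, not a consequence of juxtaposing the two leafwise expansions.

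The natural repair -- feed $\partial_x\eta$ and $\partial_y\eta$ back into the hypotheses and re-apply the order-one case -- does not close the gap either, because neither derived function is known to be leafwise $C^\infty$ along \emph{both} foliations. The restriction of $\partial_x\eta$ to a curved $\mathcal F_2$-leaf is a priori only continuous, and the transverse dependence of $\mathcal F_2$ in the settings where the lemma is used (stable/unstable/center foliations of a partially hyperbolic system) is only H\"older, so differentiating the leaf-parameter change of coordinates -- which your bootstrap implicitly requires when converting $\mathcal F_2$-expansions into ambient ones -- is simply not available. Journ\'e's actual argument avoids identifying mixed partials degree by degree: it establishes a Campanato-type estimate, producing \emph{some} polynomial of degree $r$ approximating $\eta$ to order $o(\rho^{r})$ on balls of radius $\rho$, and combines the two leafwise regularity hypotheses by an interpolation scheme that never singles out the individual mixed partials of $\eta$. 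That mechanism is substantially different from the converse-Taylor induction you describe, and exists precisely because the naive bootstrap fails at the point you flagged as a ``secondary subtlety''; in this problem the patching of derived leafwise derivatives into globally controlled data is the primary difficulty, not a routine verification.
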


\begin{lemma} \label{lemma:infnityflow}
The flow $\psi^U_t$ is a $C^{\infty}$ flow.
\end{lemma}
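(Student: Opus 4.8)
The plan is to establish that $\psi_t$ is uniformly $C^\infty$ along each of the three foliations $\mathcal F^c$, $\mathcal F^s$ and $\mathcal F^u$ separately, and then to glue these three leafwise regularities into genuine smoothness of $\psi$ by two successive applications of Journé's Lemma (Theorem \ref{theo:JN}). Concretely: first, working inside each center-unstable leaf, combine the regularity of $\psi_t$ along $\mathcal F^c$ with its regularity along $\mathcal F^u$ to conclude that $\psi_t$ is uniformly $C^\infty$ along $\mathcal F^{cu}$; then, since $\mathcal F^{cu}$ and $\mathcal F^s$ are transverse and of complementary dimension, a second application of Journé's Lemma gives that the time-$t$ map $\psi_t\colon M\to M$ is $C^\infty$ for each fixed $t$. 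The joint continuity in $(t,x)$ (already proved) together with the flow property then upgrades this to $\psi$ being a $C^\infty$ flow.

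\textbf{Smoothness along $\mathcal F^c$.} Parametrize a center leaf $L$ by Riemannian arc length. By Lemma \ref{lemma:concluderegular} (and since we are in the non-atomic alternative, where $\mathcal F^c$ is leafwise absolutely continuous) we have $\rho_L=\theta_L\,\lambda_L$ for a positive continuous density $\theta_L$, and in the arc-length parameter the flow $\psi_t|_L$ is exactly the reparametrized unit-speed flow determined by the relation $\int\theta_L=t$; hence $\psi$ is $C^k$ along $L$ precisely when $\theta_L$ is $C^k$, uniformly in $L$. So the heart of the matter is to prove $\theta_L\in C^\infty$. For this I would exploit that $\rho$ is simultaneously $f$-invariant and invariant under the stable holonomies inside center-stable leaves and the unstable holonomies inside center-unstable leaves; since $\dim E^c=1$ forces the strong center-bunching condition, these last holonomies are $C^{1+\beta}$ for some $\beta\in(0,1)$, and transferring $\rho_L$ along $su$-paths (which connect any two center leaves, by accessibility) yields cohomology-type relations for $\theta_L$ whose regularity can be bootstrapped, using the $f$-invariance and the uniform contraction/expansion of $f$ along $E^s$ and $E^u$, exactly as in the argument of \cite{AVW}.

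\textbf{Smoothness along $\mathcal F^s$ and $\mathcal F^u$.} Here I would use the holonomy-invariance of $\psi_t$ directly. Fix a center-stable leaf $W^{cs}$ and introduce the product coordinates given by its two complementary subfoliations $\mathcal F^s|_{W^{cs}}$ and $\mathcal F^c|_{W^{cs}}$; because $\psi_t$ commutes with the stable holonomy between center leaves, in these coordinates $\psi_t$ acts only on the center coordinate, i.e.\ it is the center-leaf flow times the identity in the stable direction. In particular, $\psi_t$ restricted to a stable leaf $W^s(x)$ coincides with the $\mathcal F^c$-holonomy $W^s(x)\to W^s(\psi_t x)$ taken inside $W^{cs}$, and the required leafwise $C^\infty$ regularity follows by bootstrapping this holonomy: it is conjugated by $f^n$ to the analogous holonomy on $W^s(f^n x)$, $f$ uniformly contracts $W^s$, $f$ is $C^{1+\alpha}$ and center-bunched, and --- crucially --- one already knows from the previous step that the center-leaf density is smooth. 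The argument inside center-unstable leaves is symmetric.

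\textbf{Main obstacle.} The hard step is the proof that the conditional density $\theta_L$ is $C^\infty$ along center leaves: this is where one has to combine, in the right order, the $f$-invariance, the $su$-holonomy invariance, the $C^{1+\beta}$-regularity of the $cs$/$cu$-holonomies (available here only because $\dim E^c=1$), and accessibility, and it is the step that genuinely reproduces the technique of \cite{AVW}. By contrast, the product-coordinate description of $\psi_t$ transverse to the center leaves and the two Journé gluings are comparatively routine once that step is in place.
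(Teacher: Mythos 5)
Your architecture matches the paper's: prove uniform $C^\infty$ regularity of $\psi_t$ along $\mathcal F^c$, $\mathcal F^s$ and $\mathcal F^u$ separately, then glue with two applications of Journ\'e's Lemma (the paper pairs $c$ with $s$ and then $cs$ with $u$; you pair $c$ with $u$ and then $cu$ with $s$, which is a purely cosmetic difference). The divergence is in the two leafwise steps, and the one for $\mathcal F^s,\mathcal F^u$ contains a genuine gap. For $\mathcal F^c$ you propose a bootstrap of the density $\theta_L = d\rho_L/d\lambda_L$ via cohomological relations \`a la \cite{AVW}; the paper instead exploits accessibility to identify $\psi_t|_L$ with a fixed concatenation $h_{x,x'}$ of stable and unstable holonomies (since $h_{x,x'}$ preserves $\rho_L$ and orientation it commutes with $\psi_s$, so it equals $\psi_t$ on $L$). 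Your route is plausible in spirit but only gestured at; you never state the cohomological relations or how the bootstrap closes.

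The real problem is your argument for $\mathcal F^s$ and $\mathcal F^u$. Conjugating $\psi_t|_{W^s(x)}$ by $f^n$ yields $\psi_t|_{W^s(f^n x)}$, but since $\rho$ is $f$-invariant the center displacement under this holonomy is still $t$ for every $n$; it never approaches the identity, so the conjugation produces no gain of regularity. Knowing that $\theta_L$ is smooth \emph{along} the center leaf does not control how $\theta$, and hence the velocity field $X$ of $\psi$, varies \emph{transversally} along $W^s(x)$, which is precisely what the $C^\infty$ regularity of $\psi_t|_{W^s(x)}$ requires. Crucially, you never invoke Lemma~\ref{lemma:preservesvolume}, namely that $\psi_t$ preserves the volume $m$; this is the engine of the paper's argument. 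Because $(\psi_t)_*m^s_x = m^s_{\psi_t(x)}$ for every $x$, and because the disintegration of $m$ along $\mathcal F^s$ has $C^\infty$, transversely continuous densities (as in~\cite[Lemmas 7.6--7.8]{AVW}), the map $\psi_t$ is forced to solve an ODE along stable plaques with $C^\infty$, transversely continuous coefficients, hence is uniformly $C^\infty$ along $\mathcal F^s$; the same applies to $\mathcal F^u$. Without this volume-preservation input, your sketch has no source of higher regularity in the stable and unstable directions.
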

\begin{proof}
Let $L$ be a center plaque inside $U$ and let  $t\in \mathbb R$ be such that $\varphi^U_t(x)=x'$ is well defined in $L$. Consider $x_0=x, x_1, x_2, \ldots, x_n=x'$ be an $su$-local-sequence connecting $x$ and $x'$, that is, with $x_i \in U$ for every $0\leq i \leq n$ . 

Let $T_i$ be the center plaque on $U$ containing $x_{i}$, in particular $T_0=T_n=L$. By the invariance of the disintegration inside $U$ we have
\[\mu^{U}_{x_{i+1}} = (h^{\tau_i}_{x_i,x_{i+1}})_*\mu^{U}_{x_{i}}, \quad \tau_i \in \{s,u\}.\]
In particular we have
\[\mu^U_L = (h_{x,x'})_*\mu^U_L, \]
where $h_{x,x'}$ is a composition of stable and unstable holonomies, therefore a $C^1$ diffeomorphism, and is defined from a neighborhood of $x$ onto a neighborhood of $x'$. Since $x'=\psi_t^U(x)$, by the definition of $\psi^U_t$ and \eqref{eq:holo} we have
\[\psi^U_t = h_{x,x'}, \quad \text{restricted to a neighborhood of}\; x.\]
Thus $\psi^U_t$ is $C^1$ along $L$ and, consequently, $\psi^U_t$ is $C^1$ along center plaques.

%
%
%

Now we will prove that $\psi^U$ is uniformly $C^{r}$ along stable and unstable plaques inside $U$. The argument to prove this last part is the same argument from \cite[Lemmas 7.7, 7.8]{AVW}. We briefly repeat the argument here for the sake of completeness.

Consider $\{\mu^s_x : x\in U\}$ the disintegration of the smooth measure $\mu$ along the plaques of $\mathcal F^s$ in $U$. Since this disintegration is continuous (moreover it is also transversely continuous and with $C^{r}$ densities (see \cite[Lemma 7.6]{AVW})), the map $x \mapsto \mu_x^s$ is continuous. Let $t$ be fixed, then since $\psi_t$ preserves $\mu$ (by Lemma \ref{lemma:preservesvolume}) we have
\begin{equation}\label{eq:disintttt}
(\psi_t)_*\mu^s_x = \mu^s_{\psi_t(x)}, \quad \mu-a.e. \; x\in U.\end{equation}
The disintegration on the right side is situated in the foliation box $\psi_t(U)$ and is also continuous. Since $\psi_t$ is a homeomorphism, the disintegration on both sides are continuous and $\mu$ is smooth, \eqref{eq:disintttt} extends to every point of $U$. That is, $(\psi_t)_*\mu^s_x = \mu^s_{\psi_t(x)}$, for every $x\in U$. In particular, since the densities of $\mu^s_x$ are smooth, $\psi_t$ is the solution of an ordinary differential equation along $\mathcal F^s$-leaves with smooth and transversely continuous coefficients. Thus the solutions are as smooth as the coefficients and vary continuously with the leaf. Therefore, $\psi_t$ is uniformly $C^{r}$ along stable plaques inside $U$. Analogously, $\psi_t$ is uniformly $C^{r}$ along unstable plaques inside $U$.
Finally, by Theorem \ref{theo:JN}, for $t$ and $x$ fixed, since any leaf $\mathcal F^{cs}(x)$ is subfoliated by $\mathcal F^c$ and $\mathcal F^s$ and since $(\psi_t)_{|_{\mathcal F^{cs}(x)}}$ is uniformly $C^{r}$ along $\mathcal F^c$ and $\mathcal F^s$-leaves, we conclude that $\psi_t$ is uniformly $C^{r}$ along $\mathcal F^{cs}$-leaves. Applying the same argument to the pair of transverse foliations $\mathcal F^u$ and $\mathcal F^{cs}$ we conclude that $\psi_t$ is indeed $C^{r}$ on $U$ uniformly in $t$. In particular $\psi$ is $C^{r}$ on $U$ as we wanted.
\end{proof}

Since $\psi_t$ is a $C^{r}$ flow on each open chart $U$ and $\mathcal F^c|U$ is composed by orbits of $\psi_t$ then $\mathcal F^c|U$ is a $C^{r}$ foliation. In particular, as the argument is true for each set from a finite cover by local charts we conclude that $\mathcal F^c$ is $C^r$ as we wanted to show. $\square$

\section{Construction of $\varepsilon$-regular coverings} \label{sec:appB}

Along this section we assume that $f$ is a $C^{1+\alpha}$ volume preserving partially hyperbolic diffeomorphism satisfying hypothesis (1) and (2) from Theorem \ref{theo:KBGeneral}. Here we show the few technical adaptations necessary to show the existence of $\varepsilon$-regular covers of $M$ when we assume that $\mathcal F^{cs}$ is leafwise absolutely continuous. The remaining of the argument to obtain the Bernoulli property will be pointed out in Appendix \ref{sec:appA}.

\begin{definition}
A rectangle is a pair $(P,z)$ where $P\subset M$ is a  measurable set equipped with a point $z\in P$ satisfying the following property: for all $x,y \in P$ the local manifolds $W^{u}_{P}(x)$ and $W^{cs}_{P}(y)$ intersect in a unique point inside $P$.

For the sake of simplicity we also refer to $P$ as being the parallelepiped and to $z$ as being a distinguished point chosen inside $P$.
\end{definition}

It is easy to see from the definition that a rectangle $P$ can be identified with the product: 
\[W^u_P(x) \times W^{cs}_P(x),\]
for any $x\in P$.

\begin{lemma}
Let $P$ be a small enough rectangle. Let $m$ be the volume measure preserved by $f$ and let $\{m^{u}_x\}_x$ the conditional measures obtained from the disintegration of of $m$ along $\mathcal F^{u}$ and $\nu^{cs}_x$ the factor measure in $\mathcal F^{cs}(x)$. 
Then, for any $z\in P$, restricted to $P$ we have $\nu^{cs}_z << \lambda^{cs}_z$ and
\[m^{\Pi}_{P}:= m^u_z \times \nu^{cs}_z << m. \]
\end{lemma}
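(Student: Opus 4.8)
The plan is to unpack the definitions of the conditional and factor measures in the rectangle $P$, reduce everything to a statement about the disintegration of $m$ along $\mathcal{F}^u$, and then invoke the absolute continuity hypotheses already assembled in the paper — in particular that $\mathcal{F}^u$ is absolutely continuous (hence $m$ has Lebesgue disintegration along $\mathcal{F}^u$) together with hypothesis (2) of Theorem~\ref{theo:KBGeneral}, which via the Remark following that theorem and Lemma~\ref{lemma:PVW} says the $\mathcal{F}^{cs}$-holonomies between $\mathcal{F}^u$-leaves are absolutely continuous. First I would shrink $P$ so that it sits inside a single foliation box for $\mathcal{F}^u$ (and $\mathcal{F}^{cs}$) and identify $P$ with the product $W^u_P(z)\times W^{cs}_P(z)$ via the local product structure given just above the statement. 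Under this identification the partition of $P$ into $u$-plaques is the product partition with vertical fibers, so the factor measure $\nu^{cs}_z$ is literally $\pi_*\big(m|_P\big)$ pushed to the $cs$-factor $W^{cs}_P(z)$.

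The first substantive step is $\nu^{cs}_z \ll \lambda^{cs}_z$. Since $\mathcal{F}^u$ is absolutely continuous, $m$ has Lebesgue disintegration along $\mathcal{F}^u$, so each $m^u_x \ll \lambda^u_x$. Writing $m|_P = \int m^u_x \, d\nu^{cs}_z(x)$ and using that the $u$-holonomies between $cs$-transversals are absolutely continuous (again because $\mathcal{F}^u$ is an absolutely continuous foliation), one transports the leaf-Lebesgue measures consistently; the claim $\nu^{cs}_z\ll\lambda^{cs}_z$ then follows from a Fubini-type argument: if $A\subset W^{cs}_P(z)$ had $\lambda^{cs}_z(A)=0$ but $\nu^{cs}_z(A)>0$, the $u$-saturate of $A$ would have positive $m$-measure while, by absolute continuity of the $u$-holonomy (transporting $A$ across $cs$-leaves) and Fubini on the product box, it would be $m$-null — a contradiction. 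This is essentially the argument of \cite[Lemma~3.4]{AVW} run inside the box $P$.

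The second step is $m^\Pi_P := m^u_z\times\nu^{cs}_z \ll m$. Here I would write $m|_P$ itself in the product box as $\int_{W^{cs}_P(z)} m^u_x\, d\nu^{cs}_z(x)$ and compare it with $m^u_z\times\nu^{cs}_z = \int_{W^{cs}_P(z)} m^u_z\, d\nu^{cs}_z(x)$. So it suffices to show that for $\nu^{cs}_z$-a.e.\ $x$, $m^u_z \ll m^u_x$ when both are viewed on the plaque $W^u_P(x)$ via the identification (equivalently via the $cs$-holonomy $h^{cs}_{z,x}$ carrying $W^u_P(z)$ to $W^u_P(x)$). Since hypothesis (2) of Theorem~\ref{theo:KBGeneral} gives that the $\mathcal{F}^{cs}$-holonomies between $m$-almost every pair of $\mathcal{F}^u$-leaves are absolutely continuous, and $(h^{cs}_{z,x})_* m^u_z$ is equivalent to $m^u_x$ for a full-measure set of $x$, we get $m^u_z \ll m^u_x$ (as measures on $W^u_P(x)$) for $\nu^{cs}_z$-a.e.\ $x$. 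Feeding this back into the two integral representations yields $m^\Pi_P \ll m|_P \le m$, with an explicit Radon--Nikodym derivative given by the $cs$-holonomy Jacobians.

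The main obstacle I anticipate is the bookkeeping of \emph{which} base point the conditionals are indexed by and ensuring the null-set exceptions from the three absolute-continuity statements (Lebesgue disintegration along $\mathcal{F}^u$, $u$-holonomy absolute continuity, $cs$-holonomy absolute continuity between $\mathcal{F}^u$-leaves) can all be arranged to be disjointable — i.e.\ choosing $z$ outside all the exceptional sets and then propagating the good set along holonomies. Concretely, one must check that the ``$m$-almost every pair'' in hypothesis (2) can be upgraded to ``for a $\nu^{cs}_z$-conull set of $x$, the pair $(z,x)$ is good'' for the particular chosen $z$; this is the content of Corollary~\ref{cor:AuxtoPVW} (applied with $\mathcal{F}=\mathcal{F}^u$ and the transversal foliation $\mathcal{F}^{cs}$ — whose holonomies between $\mathcal F^u$-leaves are absolutely continuous), which guarantees that for almost every $x$ in a leaf the relevant holonomy is absolutely continuous, exactly the statement needed after fixing $z$ generically. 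Once that reduction is in place, the remaining estimates are the routine Fubini/Radon--Nikodym manipulations in a product chart and I would not spell them out in detail.
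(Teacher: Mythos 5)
Your proposal is correct and follows essentially the same route as the paper's proof: both identify $P$ with the product $W^u_P(z)\times W^{cs}_P(z)$, write $m|_P$ and $m^\Pi_P=m^u_z\times\nu^{cs}_z$ as integrals over the $cs$-factor against $\nu^{cs}_z$, and then deduce $m^\Pi_P\ll m$ from the $\nu^{cs}_z$-a.e.\ absolute continuity of the $cs$-holonomy $\pi^{cs}_{x,z}$ between unstable plaques (which transports the Lebesgue-disintegrated $m^u_x\sim\lambda^u_x$ to a measure absolutely continuous with respect to $m^u_z$). Where you go beyond the paper is in flagging and handling the quantifier on $z$: the paper's statement says ``for any $z\in P$'' and its proof simply asserts that $\pi^{cs}_{x,z}$ is absolutely continuous for $\lambda^{cs}_z$-a.e.\ $x$, which under hypothesis (2) of Theorem~\ref{theo:KBGeneral} (almost every \emph{pair} of unstable transversals) is only available after choosing $z$ generically — exactly as you point out via Corollary~\ref{cor:AuxtoPVW}, and exactly as the paper itself does later in Appendix~B when it picks $z=z_U$ carefully; so your reading is the more careful one.
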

\begin{proof}
Consider $\eta$ be the partition of $P$ by local unstable leaves. Given any subset $B\subset P$ and any $z\in P$, as $\mathcal F^u$ is absolutely continuous we have
\[m(B) = \int_{P/\eta} m^u_x(B) d\nu(y),\]
where $\nu$ is the factor measure on $P/ \eta$ coming from Rohklin's Theorem. Also, by identifying $P/ \eta$ with $\mathcal F^{cs}(z)\cap P$ we have $\nu^{cs}_z << \lambda^{cs}_z$. In particular we may write,
\[m(B) = \int_{\mathcal F^{cs}(z)} m^u_x(B) d\nu^{cs}_z(y).\]
Now, by definition
\begin{equation} \label{eq:csquasi}
m^u_z \times \nu^{cs}_z (B) = \int_{\mathcal F^{cs}(z)} m^u_z(\pi^{cs}_{x,z}(B)) d\nu^{cs}_z(y).
\end{equation}
Since $\pi^{cs}_{x,z}$ is absolutely continuous for $\lambda^{cs}_z$-a.e. $x \in \mathcal F^{cs}(z)$, it is also absolutely continuous for $\nu^{cs}_z$-a.e. point. Thus, if $m(B)=0$ then by $m^u_x(B)=0$ for $\nu^{cs}_z$-a.e. $x$, which means, by the previous observation that  $m^u_z(\pi^{cs}_{x,z}(B))=0$ for $\nu^{cs}_z$-a.e. $x$ and by \eqref{eq:csquasi} it follows that $m^u_z \times \nu^{cs}_z (B) =0$ concluding the proof.
\end{proof}

\begin{definition}\label{defi:regular}
Given any $\varepsilon >0$, an $\varepsilon$-regular covering of $M$ is a finite collection of disjoint rectangles $\mathcal R=\mathcal R_{\varepsilon}$ such that:
\begin{enumerate}
\item $m(\bigcup_{R\in \mathcal R} R) > 1-\varepsilon$
\item For every $R \in \mathcal R$ we have
\[ \left| \frac{m^P_R(R)}{m(R)} -1  \right| < \varepsilon  \]

and, moreover, $R$ contains a subset, $G$, with $m(G) > (1-\varepsilon)m(R)$ which has the property that for all points in $G$,
\[\left|  \frac{dm^P_R}{dm} -1  \right| < \varepsilon.   \]
\end{enumerate}
\end{definition}

The existence of $\varepsilon$-regular covering of connected rectangles is a known fact for the non-uniformly hyperbolic case by a construction of Chernov-Haskell \cite{CH}. However, as observed in \cite{CH, PTV} if $\mathcal F^{cs}$ is absolutely continous the construction can be repeated, \textit{ipsis literis}, changing $\mathcal F^s$ to $\mathcal F^{cs}$ in the construction of \cite{CH}. The next Lemma states that $M$ always admits $\varepsilon$-coverings. The proof resembles the argument used in \cite{CH}, thus we essentially repeat the construction to show that the absolute continuity hypothesis on $\mathcal F^{cs}$ can actually be replaced by almost absolute continuity of the holonomies in the sense of property (2) of Theorem \ref{theo:KBGeneral}. 

\begin{lemma}\label{lema: partition}
Given any $\delta>0$ and any $\varepsilon >0$, there exist an $\varepsilon$-regular covering of connected rectangles $\mathcal R_{\varepsilon}$ of $M$ with $\operatorname{diam}(R) < \delta$, for every $R \in \mathcal R_{\varepsilon}$.
\end{lemma}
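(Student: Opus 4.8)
The plan is to adapt, \emph{mutatis mutandis}, the construction of $\varepsilon$-regular coverings of Chernov--Haskell \cite{CH} (see also \cite{PTV, PTV2}), with $\mathcal F^s$ replaced throughout by $\mathcal F^{cs}$, checking at each step that the \emph{everywhere} absolute continuity of $\mathcal F^{cs}$ used there can be weakened to the \emph{almost everywhere} absolute continuity of $\mathcal F^{cs}$-holonomies between unstable leaves, i.e. hypothesis (2) of Theorem \ref{theo:KBGeneral}. The whole statement is local: since $M$ is compact it is covered by finitely many foliation boxes carrying a local product structure $W^u\times W^{cs}$, and it suffices to produce, inside each such box and around $m$-a.e. point, connected rectangles of arbitrarily small diameter satisfying condition (2) of Definition \ref{defi:regular}; a Vitali-type extraction then yields the finite disjoint subfamily $\mathcal R_\varepsilon$ with $m(\bigcup_{R\in\mathcal R_\varepsilon} R)>1-\varepsilon$, which is condition (1).

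First I would fix a small product box with distinguished point $z$ and analyse the Radon--Nikodym derivative of the product measure $m^P_R=m^u_z\times\nu^{cs}_z$ of the previous Lemma with respect to $m$. That Lemma gives $m^P_R\ll m$; writing $m$ restricted to the box in product coordinates (disintegrating along $\mathcal F^u$-plaques, with factor measure $\nu^{cs}_z$ on the transversal $W^{cs}_{P}(z)$), one sees that for $\nu^{cs}_z$-a.e. fibre $W^u_P(x)$ the restriction of $dm^P_R/dm$ to that fibre equals the density of $(h^{cs}_{z,x})_*m^u_z$ with respect to $m^u_x$. Since $\mathcal F^u$ is absolutely continuous with continuous conditional densities, and since for $\nu^{cs}_z$-a.e. $x$ the holonomy $h^{cs}_{z,x}$ is absolutely continuous by hypothesis (2), this density is, off a set of small $m$-measure, a quotient of $\mathcal F^u$-disintegration densities times a holonomy Jacobian; all three factors converge to $1$ as the diameter of the box tends to $0$, because then $x\to z$ and $h^{cs}_{z,x}\to\mathrm{id}$. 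Hence, for $m$-a.e. $z$ and every $\varepsilon>0$, every sufficiently small product rectangle $P\ni z$ contains a subset $G$ with $m(G)>(1-\varepsilon)m(P)$ on which $|dm^P_R/dm-1|<\varepsilon$; and the total masses of $m^P_R$ and $m|_P$ coincide (the holonomies carry full unstable plaques onto full unstable plaques, so $m^P_R(P)=\nu^{cs}_z(W^{cs}_P(z))=m(P)$), which is the first half of condition (2). This is precisely the point where ``almost absolute continuity'' replaces absolute continuity: one only needs the holonomy Jacobian to be defined and to tend to $1$ along a full-$\nu^{cs}_z$-measure set of fibres, which is exactly what hypothesis (2) together with the disintegration comparison of the previous Lemma provides; the saturation/Fubini arguments of Lemma \ref{cor:saturatenull} and Lemma \ref{lemma:stablesaturate} are used to ensure that the exceptional unstable fibres form an $m$-null set inside the box.

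With the local statement in hand, the remaining step is the extraction: the collection of all small $\varepsilon$-regular connected rectangles constructed above is a fine cover of an $m$-conull subset of $M$, the rectangles have uniformly bounded eccentricity inside each foliation box, so a standard Vitali/Besicovitch covering argument (exactly as in \cite{CH}) extracts a finite pairwise disjoint subfamily $\mathcal R_\varepsilon$ with $m(\bigcup_{R\in\mathcal R_\varepsilon}R)>1-\varepsilon$ and $\operatorname{diam}(R)<\delta$ for every $R\in\mathcal R_\varepsilon$. Each $R$ is connected, being a local product of connected center-stable and unstable plaques, so $\mathcal R_\varepsilon$ is the desired $\varepsilon$-regular covering of connected rectangles.

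I expect the main obstacle to be the density estimate of the second paragraph, and more precisely the bookkeeping needed to express $dm^P_R/dm$ through the $\mathcal F^u$-disintegration densities and the $\mathcal F^{cs}$-holonomy Jacobians while invoking only properties valid on a full-measure set of unstable fibres, together with the control of how fast these quantities approach $1$ as the rectangle shrinks. The full details of this, as well as the (routine) Vitali extraction, are carried out in Appendix \ref{sec:appB}.
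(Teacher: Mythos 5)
Your overall strategy is the right one and matches the paper's: repeat the Chernov--Haskell construction with $\mathcal F^{cs}$ in place of $\mathcal F^s$, and absorb the weakening from everywhere to almost-everywhere absolute continuity by choosing the distinguished point $z=z_U$ of each chart generically, so that the $cs$-holonomy from $W^u(x)$ to $W^u(z)$ is absolutely continuous for a full-measure set of fibres. That part of your proposal is sound and is exactly the adaptation the paper makes.

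The gap is in your density estimate. You assert that the holonomy Jacobian factor of $dm^P_R/dm$ ``converges to $1$ as the diameter of the box tends to $0$, because then $x\to z$ and $h^{cs}_{z,x}\to\mathrm{id}$''. Uniform convergence of the holonomy maps to the identity does not imply convergence of their Jacobians to $1$: the Jacobian $J^{cs}_z(x)$ is only a measurable, a.e.-defined function, and a sequence of absolutely continuous homeomorphisms converging uniformly to the identity can have Radon--Nikodym derivatives staying far from $1$ on sets of large measure. What you would actually need is a ``diagonal'' Lebesgue-point statement --- that a.e.\ $z$ is a density point of its own Jacobian cocycle $x\mapsto J^{cs}_z(x)$ --- and this does not follow from the Lebesgue differentiation theorem applied to a fixed function, since the function itself changes with $z$. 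The proof in Appendix B sidesteps this entirely: it never claims $J^{cs}_z(x)\to 1$. Instead it applies Lusin's theorem to $J^{cs}_z$ to obtain a compact subset $P_\varepsilon$ of large relative measure on which the Jacobian is continuous and pinched between constants $a_\varepsilon$ and $A_\varepsilon$, refines the rectangle into a proper partition of subrectangles $\widetilde{\Pi}_z(i,j)$ on which the (now continuous) Jacobian oscillates by at most $a_\varepsilon\varepsilon/100$, and then re-bases each subrectangle at a new distinguished point $z(i,j)$ for which $|J^{cs}_{z(i,j)}(x)-1|\leq\varepsilon/10$ on most of $\widetilde{\Pi}_z(i,j)$; only the ratio $J^{cs}_z(x)/J^{cs}_z(y)=J^{cs}_y(x)$ is ever controlled, never the absolute size of $J^{cs}_z$ near $z$. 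You should replace the shrinking-diameter argument by this Lusin-plus-re-basing step. (Your Vitali/Besicovitch extraction for disjointness is also not what the paper does --- it builds disjoint rectangles directly from a lattice of $d$-dimensional boxes aligned with $E^u_z$ and $E^s_z$ --- but that difference is cosmetic.)
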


We remark that the proof of Lemma \ref{lema: partition} is very similar to the argument used in \cite{CH} but with the center-stable manifold playing the role of the stable manifold in the proof given in \cite{CH}. The fact that $\mathcal F^{cs}$-holonomies are absolutely continuous between \textbf{almost every} pair of transversals requires a technical adaptation which we show below in details.

\begin{proof}[proof of Lemma  \ref{lema: partition}]
Let $\varepsilon>0$ be given. Up to measure $0$, consider a cover of $M$ by a finite number of open charts separated one to the other by a finite number of smooth compact hypersurfaces. In particular, in each of the chosen charts there is a coordinate system which induces an isomorphism between a bounded domain in $\mathbb R^d$ and the respective chart.

Fix a given chart $(U,\varphi)$. 
\begin{itemize}
\item For each $x\in U$, we identify $T_xM$ with $\mathbb R^d$ via $D\varphi(x):T_xM \to \mathbb R^d$.
\item Given $x,y\in U$ and subspaces $L_x \subset T_xM$, $L_y\subset T_yM$, we denote by $\angle (L_x,L_y)$ the angle between the vector subspaces  $D\varphi(x) \cdot L_x$ and $D\varphi(y) \cdot L_y$ of $\mathbb R^d$.
\item Denoting by $\mathcal L$ the Lebesgue measure $\mathcal L$ on $\mathbb R^d$, we set $\lambda = \varphi_{*}\mathcal L$ on $U$. As the chart is smooth $\lambda$ is equivalent to the Riemannian volume $m$ defined by the metric in $M$. Thus $m << \lambda$ and there is a constant $\delta>0$ such that 
\[\lambda(A)<\delta \Rightarrow m(A)<\varepsilon/4.\]
\item The euclidian metric in $\mathbb R^d$ can be pulled back by $\varphi^{-1}$ to a metric in each chart. This metric will be called the \textit{Euclidian metric} on the chart and, since the chart is a smooth function, this metric is strongly equivalent to the Riemannian metric, say with a constant $c$, which can be taken to be smaller than $2$ by making a convenient choice of the charts and of the systems of coordinates.
\end{itemize}

As in \cite{CH}, for $x\in M$ and $\tau \in \{s,u\}$, we denote by $r^{\tau}_x$ the Euclidian distance of $x$ to $\partial \mathcal F^{\tau}(x)$ measured along the manifold $\mathcal F^{\tau}(x)$. For $\alpha>0$, $x\in M$ denote
\[r^{\tau}_x(\alpha) = \min \left\{r^{\tau}_x \; , \; \inf_{y\in \mathcal F^{\tau}(x): \angle (E^{\tau}(y), E^{\tau}(y) ) \geq \alpha} d^{\mathcal F^{\tau}}(x,y) \right\}.\]
As observed in \cite[Pg.16]{CH}, $r^{\tau}_x(\alpha)>0$ for any $x\in M$, $\alpha>0$, and $\tau \in \{s,u\}$.

By Lusin theorem we may take $M_{\varepsilon} \subset M$ a compact subset such that
\begin{itemize}
\item[i)] $m(M_{\varepsilon}) > 1-\varepsilon/4$;
\item[ii)] $x\mapsto E^u_x$ and $x\mapsto E^{cs}_x$ depend continuously on $x\in M_{\varepsilon}$;
\item[iii)] 
\[\overline{\alpha}=\min_{x\in M_{\varepsilon}}\angle (E^u(x), E^{cs}(x)) >0; \]
\item[iv)] \[\overline{r} = \min_{x\in M_{\varepsilon}, \tau \in \{s,u\}} r^{\tau}_x(\beta) >0,\]
where $\beta = \min\{\pi/3, \delta \alpha/8d\lambda(M) \}$.
\end{itemize}

Now we can cover $M_{\varepsilon}$, up to a subset of zero measure, by a finite collection of open sets $\mathcal U$ satisfying:
\begin{itemize}
\item[v)] each set of $\mathcal U$ lies in one chart, which defines a coordinate system in it;
\item[vi)] the angles $\angle (E^u_x,E^u_y), \angle (E^{cs}_x,E^{cs}_y)$ do not exceed $\beta = \min\{\pi/3, \delta \alpha/8d\lambda(M) \}$ for any $x,y \in M_{\varepsilon} \cap U, U\in \mathcal U$.
\end{itemize}

We will now associate to each $U\in \mathcal U$ a point $z=z_U \in U$. For each open set $U\in \mathcal U$ consider an arbitrary point $z'\in U$. By hypothesis, for ${\lambda}^{cs}_{z'} \times {\lambda}^{cs}_{z'} $-almost every pair in $(\mathcal F^{cs}(z') \cap U) \times (\mathcal F^{cs}(z') \cap U)$ the center-stable holonomy between transversals is absolutely continuous. In particular, we may pick $z = z_U\in \mathcal F^{cs}(z') \cap U$ such that\footnote{We remark that in the case where $\mathcal F^{cs}$ is absolutely continuous the choice of $z$ may be arbitrary.}, for ${\lambda}^{cs}_{z'}$-almost every point $w \in \mathcal F^{cs}(z') \cap U$, the center-stable holonomy between $W^u_U(w)$ and $W^u_U(z)$ is absolutely continuous.
We may assume without loss of generality that the local chart $\varphi$ defined in $U$ maps $z$ to the origin. Using this point $z=z_U$ chosen in $U$, we fix a new coordinate system defined by pulling back, through $\varphi$, the coordinate system in $\varphi(U)$ defined by $D\varphi(z)\cdot E^s(z)$ and $D\varphi(z)\cdot E^u(z)$, that is $d^u:=\operatorname{dim}(E^u)$ coordinate axes are mutually orthogonal and their tangents are parallel to $E^u_z$, and the same for $d^s:=\operatorname{dim}(E^s)$. In this new coordinate system we partition $U$ into a lattice of $d-$dimensional boxes (see Figure \ref{fig:partitionofU}) whose sides have length $r>0$, where $r$ is chosen so small that
\begin{itemize}
\item[vii)] $r< \overline{r}/2d$;
\item[viii)] the union of all boxes that lie entirely in $U$ has measure greater than $(1-\varepsilon/4)m(U)$.
\end{itemize}

\begin{figure}[htb!]
\begin{center}
\includegraphics[height=5cm]{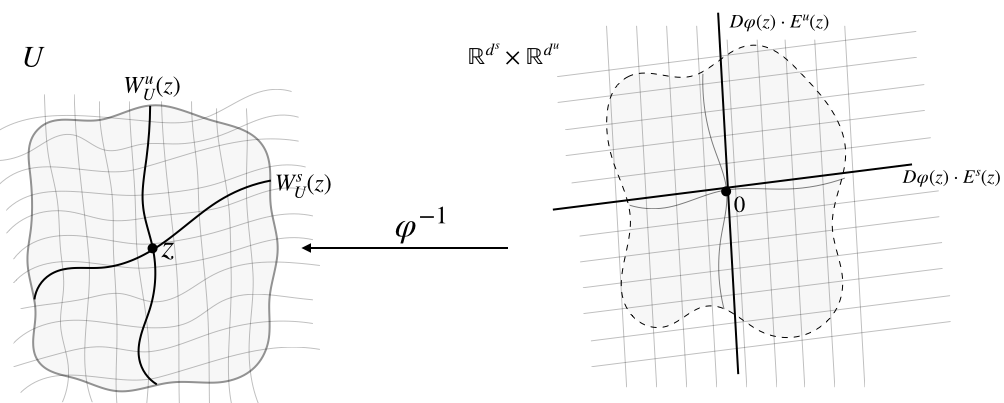}
\caption{Partition of $U$ in small $d$-dimensional boxes whose sides have tangent spaces parallel to $E^u_z$ and $E^s_z$. }
\label{fig:partitionofU}
\end{center}

\end{figure}

The boxes can be made arbitrarily small by decreasing $r$ if necessary. Denote by $\mathcal B$ the collection of all the boxes $B$ such that $B\subset U$, for some $U\in \mathcal U$. The boxes $B\in \mathcal B$ are disjoint and by (viii) we have
\[m\left(\bigcup_{B\in \mathcal B} B\right) > 1-\varepsilon/2.\]
Furthermore, since $\angle (E^{\tau}_y, E^{\tau}_x) < \beta \leq \pi/3$ for all $x,y\in B$ that lie in the same unstable (resp. center-stable) manifold, it follows that the Euclidian distance between $x$ and $y$ measured along the manifold, is less than two times the Euclidian distance between these points. Thus the second condition of the definition of regular covers is satisfied.\\

We call a face of a box $B\in \mathcal B$ , $B\subset U$, a $\tau$-face , $\tau=cs,u$, if it is parallel to $E^{\tau}_z$.  

In each box $B\in \mathcal B$ consider the collection $\mathfrak H$ of all the points $x\in B\cap M_{\varepsilon}$ for which the local manifold $W^{\tau}_B(x)$ does not cross any $\tau$-face of $B$ for $\tau=cs,u$. As these manifolds have length at least $\overline{r}$ so, by our choice of $r$ and since $\beta < \pi/3$ we have
\[\partial W^u_B(x)    \; \text{lies entirely on the cs-face},\]
\[\partial W^{cs}_B(x) \; \text{lies entirely on the u-face},\]
for any such point $x \in \mathfrak H$.
We now complete the set $\mathfrak H$ to a rectangle $\tilde{\Pi}$, which in particular lies inside $B$. We apply this argument to every $B\in \mathcal B$ and call the collection of all those new rectangles, constructed by the last procedure, by $\widetilde{\mathcal  P}$. The construction implies that (see \cite[Pg. 17]{CH}))
\[m \left( \bigcup_{\tilde{\Pi}\in \widetilde{\mathcal  P} } \widetilde{\Pi} \right) >1-3\varepsilon/4.\]

Now we proceed to obtain the measurable properties. Observe that given any rectangle $\widetilde{\Pi}$ we can partition it into a finite number of smaller rectangles by taking partitions of $W_{\widetilde{\Pi}}^u(z)$ and $W_{\widetilde{\Pi}}^{cs}(z)$,
\[W_{\widetilde{\Pi}}^u(z)= \bigcup_{i=1}^{k^u} V^u_z(i), \quad W_{\widetilde{\Pi}}^{cs}(z)= \bigcup_{j=1}^{k^{cs}} V^{cs}_z(j),\]
and taking $\widetilde{\Pi}_z(i,j)$ to be the family of all rectangles generated by $V^u_z(i)$ and $V^{cs}_z(j)$, $1\leq i\leq k^u$, $1\leq j \leq k^{cs}$. We call such a decomposition a \textit{proper partition of $\tilde{\Pi}$}.

Now for $x\in \widetilde{\Pi}$, the $cs$-holonomy map from $W^{u}_R(x)$ to $W^{u}_R(z)$ carries the measure $m^{u}_x$ to a measure on $W^{u}_R(z)$. By the choice of $z=z_U\in U$, this holonomy is absolutely continuous for $m$-almost every $x \in \widetilde{\Pi}$ and then the Jacobian
\[J^{cs}_z(x) = \frac{dm^{u}_z}{d(\pi^c)_* m^{u}_x}\]
is defined at almost every point $x\in \widetilde{\Pi}$ and is an almost everywhere finite and strictly positive measurable function in $x$.

By Lusin's theorem, for any $\varepsilon>0$, in any rectangle $\widetilde{\Pi} \in \widetilde{\mathcal P}$ there is a compact subset $P_{\varepsilon}$ of measure $m(P_{\varepsilon})> (1-\varepsilon^4/10000)m(\widetilde{\Pi})$ on which the $cs$-jacobian, that is, the jacobian of the center-stable holonomy $J^{cs}_z(x)$, is continuous in $x$. Moreover it is bounded on $P_{\varepsilon}$, so that 
\[0<a_{\varepsilon} \leq J^{cs}_z(x) \leq A_{\varepsilon} < \infty,\]
for some constants $a_{\varepsilon}$ and $A_{\varepsilon}$ and all $x\in P_{\varepsilon}$.
By continuity there is a proper partition of each $\widetilde{\Pi}$ such that, for all $\widetilde{\Pi}_z(i,j) \subset \widetilde{\Pi}$ and any $x,y\in \widetilde{\Pi}_z(i,j) \cap P_{\varepsilon}$ we have 
\[|J^{cs}_z(x) - J^{cs}_z(y) | \leq \frac{a_{\varepsilon} \varepsilon}{100}\]
and therefore,
\[\left| \frac{J^{cs}_z(x)}{J^{cs}_z(y)} - 1\right| \leq \frac{\varepsilon}{100}.\]
If $y\in \mathcal F^{cs}(x)$ then
\[\frac{J^{cs}_z(x)}{J^{cs}_z(y)} = J^{cs}_y(x).\]

For any $\widetilde{\Pi} \in \widetilde{\mathcal P}$, consider $\mathcal P_{\varepsilon}$ the collection of all subrectangles $\widetilde{\Pi}_z(i,j)$ for which
\begin{equation}\label{16}
\nu(\widetilde{\Pi}_z(i,j) \cap P_{\varepsilon}) \geq (1-\varepsilon^2/100)\nu(\widetilde{\Pi}_z(i,j)).\end{equation}
Therefore,
\[\nu \left(\bigcup_{\widetilde{\Pi}_z(i,j) \in \mathcal P_{\varepsilon}} \widetilde{\Pi}_z(i,j) \right) \geq (1-\varepsilon^2/100) \nu(\widetilde{\Pi}),\]
so that we does not need to take in consideration the subrectangles $\widetilde{\Pi}_z(i,j)$ that fail to satisfy \eqref{16}.

Finally, for any $\widetilde{\Pi} \in \widetilde{\Pi}$ and any $\widetilde{\Pi}_z(i,j) \in \mathcal P_{\varepsilon}$ there is a point $z(i,j) \in \widetilde{\Pi}_z(i,j)$ such that the Jacobian $J^{cs}_{z(i,j)}(x)$ is sufficiently close to one 
\[|J^{cs}_{z(i,j)}(x)-1| \leq \varepsilon/10,\]
on a subset of points $x\in \widetilde{\Pi}_z(i,j)$ whose measure is at least $(1-\varepsilon/10) \nu(\widetilde{\Pi}_z(i,j))$ in virtue of \eqref{16}.

Integrating the Jacobian $J^{cs}_{z(i,j)}(x)$ inside the rectangles belonging to $\mathcal P_{\varepsilon}$ we obtain
\[\left| \frac{dm^P_{\Pi}}{dm} -1 \right|   = \left| \frac{d(m^{u}_{z(i,j)} \times \nu^{cs}_{z(i,j)})}{dm}-1 \right| < \varepsilon.\]

That is such rectangle satisfies the product property of $\varepsilon$-regular coverings. Also, the measure of all those rectangles is greater than $1-\varepsilon$, so that we obtain an $\varepsilon$-regular covering in $M$ by arbitrarily small rectangles as we wanted. \end{proof}

\section*{Appendix A: Proof of Theorem \ref{theo:KBGeneral}} \label{sec:appA}

Once the construction of the $\varepsilon$-regular covering is done the proof of the Bernoulli property is obtained following the same lines as in \cite{CH} with $\mathcal F^{cs}$ playing the role of $\mathcal F^s$ (similar to the argument used in \cite{PTV2}). In what follows we will describe the scheme of the proof pointing out the steps in which the argument is the same as in \cite{CH, PTV2} and the point in which the Lyapunov stability along the center direction is used. 

\vspace{.5cm}

\noindent \textbf{The basis of the approach:} \quad \\
%

In what follows $\mathbf X=(X,\mu)$ and $\mathbf Y=(Y,\nu)$ are non-atomic Lebesgue spaces, that is, they are both measurably isomorphic to the unit interval $[0,1]$ endowed with the Borel $\sigma$-algebra and the standard Lebesgue measure.

A probability measure $\eta$ on the product space $X\times Y$ is a \textit{joining}  of $X$ and $Y$ if the marginals, or projections, of $\eta$ are $\mu$ and $\nu$, that is, for any measurable sets $A\subset X$, $B\subset Y$ we have
\[\eta(A\times Y) = \mu(A), \quad \text{and} \quad \eta(X\times B) = \nu(B).\] 
We denote by $J(\mathbf X,\mathbf Y)$ the set of all joinings of $\mathbf X=(X,\mu)$ and $\mathbf Y=(Y,\nu)$. 

Let $\alpha = \{A_1,...,A_k \}$ and $\beta = \{B_1,...,B_k\}$ be finite partitions of $X$ and $Y$ respectively. Given $x\in X$, denote by $\alpha(x)$ the atom of $\alpha$ which contains $x$. For $y\in Y$, $\beta(y)$ is defined in a similar way. 

\begin{definition} \label{defi:dbar}
The $\overline{d}$-distance \index{$\overline{d}$-distance! between two partitions} between $\alpha$ and $\beta$ is defined by:
\[\overline{d}(\alpha,\beta) = \inf_{\eta \in J(\mathbf X,\mathbf Y)} \eta \{ (x,y) : \alpha(x) \ne \beta(y)\}. \] 
\end{definition}

Observe that the definition of the $\overline{d}$-distance reflects the idea that we want to measure how small is the set of pairs belonging to atoms of different indexes.

\begin{definition}
Given a sequence of finite partitions $\{\alpha_i\}_1^n$ of $X$, we define the sequence of integer functions $l_i(x)$ by the condition $x\in A^{(i)}_{l_i(x)}$, where $\alpha_i = \{A^{(i)}_{1}, A^{(i)}_{2}, \ldots, A^{(i)}_{n_i}\}$. This sequence of functions $l_i(x)$ is called the $\alpha$-name of the sequence of partitions $\{\alpha_i\}_1^n$.
\end{definition}

Given two sequences of finite partitions $\{\alpha_i\}_{i=1}^n$ and $\{\beta_i\}_{i=1}^n$ of $X$ and $Y$ respectively, a natural way to measure the difference between the $\alpha$-name of a point $x \in X$ and the $\beta$-name of a point $y \in Y$ is to take the function
\begin{equation}
\label{function:h}
h(x,y) = \frac{1}{n} \sum_{i: l_i(x) \ne m_i(y)} 1,\end{equation}
where $\{l_i\}_{i=1}^n$ is the $\alpha$-name of the sequence of partitions $\{\alpha_i\}_1^n$ and $\{m_i\}_{i=1}^n$ is the $\beta$-name of the sequence of partitions $\{\beta_i\}_1^n$.

The \textit{$\overline{d}$-distance} between the sequences of finite partitions $\{\alpha_i\}_{i=1}^n$ and $\{\beta_i\}_{i=1}^n$  is defined by
\[\overline{d}(\{\alpha_i\}_{i=1}^n, \{\beta_i\}_{i=1}^n) = \inf_{\lambda \in J(\mathbf X, \mathbf Y)} \int_{X\times Y} h(x,y) \; d\lambda .\]

A measurable map $\theta:X\rightarrow Y$ is called \textit{$\varepsilon$-measure preserving} if there exists a subset $E \subset X$ such that $\mu(E)\leq \varepsilon$ and for every measurable set $A\subset X\setminus E$,
\begin{equation} \label{eq:epsilonpreserving}
\left| \frac{\mu(A)}{\nu(\theta(A))} - 1 \right| \leq \varepsilon.
\end{equation}

\begin{definition}
Let $f:X \rightarrow X$ be a $\mu$-preserving isomorphism of a measure space $(X,\mu)$. A partition $\alpha$ of $X$ is called a Very Weak Bernoulli partition (VWB) \index{Partition! Very Weak Bernoulli} for $f$ if for any $\varepsilon>0$ there exists $N_0 = N_0(\varepsilon)$ such that for any $N' \geq N \geq N_0$, $n\geq 0$, and $\varepsilon$-almost every element $A\in \bigvee_{k=N}^{N'}f^k\alpha$, we have
\[\overline{d}( \{ f^{-i}\alpha \}_1^n, \{f^{-i}\alpha|A  \}_1^n  )   \leq \varepsilon, \]
where the partition $\alpha|A$ is considered with the normalized measure $\mu/\mu(A)$.
\end{definition}

\begin{theorem} \cite{Ornstein70, Ornstein} \label{theo:ornsteins}
Let $(X,\mathcal B, \mu)$ be a non-atomic Lebesgue space and $f:X \to X$ be a measure preserving automorphism. If there exists a sequence of Very Weak Bernoulli partitions 
\[\varepsilon_1 < \varepsilon_2 < \ldots, \]
with $\operatorname{diam}(\varepsilon_n) \rightarrow 0$. Then $(X, \mu, f)$ is a Bernoulli system.
\end{theorem}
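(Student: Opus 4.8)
The plan is to upgrade the Very Weak Bernoulli (VWB) property of each $\varepsilon_n$ to the statement that the $f$-invariant $\sigma$-algebra it generates is a Bernoulli factor, and then to recover $(X,\mu,f)$ as an increasing limit of such factors. For each $n$ set $\mathcal F_n:=\bigvee_{k\in\mathbb Z}f^{k}\varepsilon_n$, the smallest $f$-invariant sub-$\sigma$-algebra containing $\varepsilon_n$. Since $\varepsilon_1<\varepsilon_2<\cdots$ the $\mathcal F_n$ increase, and since $\operatorname{diam}(\varepsilon_n)\to 0$ (the mesh of the atoms measured in the ambient metric space, which in all our applications is a compact manifold), a refining sequence of finite partitions with vanishing mesh separates points and hence generates the Borel $\sigma$-algebra: $\bigvee_n\mathcal F_n=\mathcal B$ modulo $\mu$-null sets. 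This last point is the only genuinely soft step.

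For the first step I would invoke the core of Ornstein's isomorphism theory. One shows that VWB implies the \emph{finitely determined} (FD) property -- the $\bar d$-smallness of conditional futures given typical far pasts in the definition of VWB is exactly what makes the finite-dimensional distributions together with the entropy $\bar d$-determine the process -- and then one applies Ornstein's fundamental copying/coupling lemma, the device behind Sinai's factor theorem and the isomorphism theorem, to build a measure isomorphism between the itinerary process $(f,\varepsilon_n)$ and a Bernoulli shift of equal entropy. Equivalently one quotes directly the theorem that a VWB process is isomorphic to a Bernoulli shift. Either way each $\mathcal F_n$ is a Bernoulli factor of $(X,\mu,f)$.

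For the second step I would use Ornstein's closure theorem: if $\mathcal G_1\subset\mathcal G_2\subset\cdots$ are $f$-invariant sub-$\sigma$-algebras with $\bigvee_n\mathcal G_n=\mathcal B$ and each $(f,\mathcal G_n)$ Bernoulli, then $(f,\mathcal B)$ is Bernoulli. Applying this to $\mathcal G_n=\mathcal F_n$ yields that $(X,\mu,f)$ is Bernoulli; since in the situations where the theorem is applied $f$ has finite entropy, the resulting Bernoulli automorphism is measurably conjugate to a finite-alphabet Bernoulli shift, as in the definition used in this paper.

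The technical content -- and the main obstacle -- lies entirely inside Ornstein theory: (a) the implication VWB $\Rightarrow$ Bernoulli for a single process, which rests on the Sinai/Ornstein copying lemma and the $\bar d$-continuity of entropy and is the heart of the isomorphism theorem; and (b) the increasing-limit closure, whose proof itself reduces to checking that the full process is finitely determined by approximating arbitrary partitions by $\mathcal F_n$-measurable ones and controlling the relative entropy of the complement. Both are standard but far beyond what should be reproved here, so in the write-up I would cite \cite{Ornstein70, Ornstein} (alternatively Shields' monograph) for them and supply only the bookkeeping in the paragraphs above.
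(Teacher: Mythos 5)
The paper does not prove this theorem; it is stated as a citation to Ornstein's work (\cite{Ornstein70, Ornstein}) and used as a black box. Your sketch is the standard Ornstein-theoretic route -- each $\mathcal F_n=\bigvee_{k\in\mathbb Z}f^k\varepsilon_n$ is a Bernoulli factor because VWB is equivalent to finitely determined, and the increasing union of Bernoulli factors generating $\mathcal B$ is Bernoulli by Ornstein's closure theorem -- and matches the content of the cited references, so there is nothing to compare against in the paper itself.

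One small caution worth flagging: as stated, $(X,\mathcal B,\mu)$ is only a Lebesgue space, so $\operatorname{diam}(\varepsilon_n)\to 0$ is not meaningful without fixing a compatible metric; you correctly note that in the paper's applications $X$ is a compact manifold, where a refining sequence of partitions with vanishing mesh does separate points mod $\mu$ and hence $\bigvee_n\mathcal F_n=\mathcal B$. This hypothesis (rather than merely ``$\varepsilon_n$ increases'') is exactly what is needed for the generation step, and your write-up handles it correctly.
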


 The Lemma which allows us to do the approach we perform here is the following.

\begin{lemma} \cite[Lemma 4.3]{CH} \label{lemma:functiontheta}
Let $(X,\mu)$ and $(Y,\nu)$ be two nonatomic Lebesgue probability spaces. Let $\{\alpha_i\}$ and $\{\beta_i\}$, $1\leq i \leq n $, be two sequences of partitions of $X$ and $Y$, respectively. Suppose there is a map $\theta: X\to Y$ such that
\begin{itemize}
\item[1)] there is a set $E_1 \subset X$ whose measure is less than $\varepsilon$, outside of which
\[h(x,\theta(x)) < \varepsilon.\]
\item[2)] There is a set $E_2\subset X$ whose measure is less than $\varepsilon$, such that for any measurable set $A\subset X\setminus E_2$
\[\left| \frac{\mu(A)}{\nu(\theta(A))} - 1 \right| < \varepsilon.\]
\end{itemize}
Then 
\[\overline{d}(\{\alpha_i\}, \{\beta_i\}) < c\cdot \varepsilon.\]
\end{lemma}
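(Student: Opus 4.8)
\textit{Proof proposal.}

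The plan is to produce a single joining $\lambda\in J(\mathbf X,\mathbf Y)$ with $\int_{X\times Y} h\,d\lambda\le c\varepsilon$; since $\overline d(\{\alpha_i\},\{\beta_i\})$ is by definition the infimum of $\int h\,d\lambda$ over all joinings, this is enough. The natural first candidate is the graph joining $\lambda_0:=(\mathrm{id}_X\times\theta)_*\mu$, which already has the correct first marginal $\mu$ and, by hypothesis (1) together with $0\le h\le 1$, satisfies $\int h\,d\lambda_0=\int_X h(x,\theta(x))\,d\mu(x)\le\varepsilon+\mu(E_1)<2\varepsilon$. The only defect of $\lambda_0$ is that its second marginal $\tilde\nu:=\theta_*\mu$ need not equal $\nu$, and the substance of the argument is to repair this at an additional cost of order $\varepsilon$.

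First I would use hypothesis (2) to control the total variation of $\tilde\nu-\nu$. Given a measurable $B\subset Y$, set $A:=\theta^{-1}(B)\setminus E_2\subset X\setminus E_2$; then hypothesis (2) gives $\mu(A)\le(1+\varepsilon)\nu(\theta(A))\le(1+\varepsilon)\nu(B)$, so $\tilde\nu(B)=\mu(\theta^{-1}B)\le\mu(A)+\mu(E_2)\le(1+\varepsilon)\nu(B)+\varepsilon\le\nu(B)+2\varepsilon$. Applying the same estimate to $B^c$ and subtracting from $1$ yields $\nu(B)-\tilde\nu(B)\le2\varepsilon$, hence $\|\tilde\nu-\nu\|_{TV}\le2\varepsilon$. (Reading hypothesis (2) presupposes that $\theta(A)$ is measurable; this is automatic in the Chernov--Haskell situation, where $\theta$ is essentially injective.)

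Next I would correct the joining via a maximal coupling. From $\|\tilde\nu-\nu\|_{TV}\le2\varepsilon$ one obtains a joining $\pi$ of $\tilde\nu$ and $\nu$ on $Y\times Y$ with $\pi(\Delta_Y)\ge1-2\varepsilon$, where $\Delta_Y$ is the diagonal. Disintegrating $\pi$ over its first coordinate produces a measurable family $\{\pi_y\}_{y\in Y}$ of probabilities on $Y$ with $\int\pi_y\,d\tilde\nu(y)=\nu$ and $\int\pi_y(\{y\})\,d\tilde\nu(y)=\pi(\Delta_Y)\ge1-2\varepsilon$. Setting $\lambda:=\int_X\delta_x\otimes\pi_{\theta(x)}\,d\mu(x)$, the first marginal is $\mu$ and the second is $\int_X\pi_{\theta(x)}\,d\mu(x)=\int_Y\pi_y\,d\tilde\nu(y)=\nu$, so $\lambda\in J(\mathbf X,\mathbf Y)$.

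Finally I would estimate $\int h\,d\lambda$. The set $D:=\{(x,y):y=\theta(x)\}$ has $\lambda(D)=\int_X\pi_{\theta(x)}(\{\theta(x)\})\,d\mu(x)=\int_Y\pi_y(\{y\})\,d\tilde\nu(y)\ge1-2\varepsilon$. On $D\setminus(E_1\times Y)$ one has $h(x,y)=h(x,\theta(x))<\varepsilon$ by hypothesis (1); elsewhere $h\le1$, and the complement of $D\setminus(E_1\times Y)$ has $\lambda$-measure at most $\lambda(D^c)+\mu(E_1)<3\varepsilon$. Hence $\int h\,d\lambda\le\varepsilon+3\varepsilon=4\varepsilon$ and $\overline d(\{\alpha_i\},\{\beta_i\})\le4\varepsilon$, so the statement holds with $c=4$. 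The only genuinely delicate point is the measurability/injectivity of $\theta$ underlying the total-variation step; beyond that the proof is a soft coupling computation that is uniform in $n$ and in the partitions, which is exactly why the constant $c$ is universal.
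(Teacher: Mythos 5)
Your argument is correct and yields the conclusion with $c=4$. The paper itself does not prove this lemma --- it is invoked as a black box from \cite[Lemma 4.3]{CH} --- so there is no in-paper proof to compare against; your coupling argument is a clean, self-contained route. The three pieces all check out: hypothesis (2) applied to $A=\theta^{-1}(B)\setminus E_2$ (whose image lies in $B$) gives $\tilde\nu(B)\le\mu(E_2)+(1+\varepsilon)\nu(B)\le\nu(B)+2\varepsilon$, and the same for $B^c$ yields $\sup_B|\tilde\nu(B)-\nu(B)|\le 2\varepsilon$; the maximal-coupling theorem then produces $\pi$ with $\pi(\Delta_Y)\ge 1-2\varepsilon$; and the mixture $\lambda=\int_X\delta_x\otimes\pi_{\theta(x)}\,d\mu(x)$ has the right marginals and concentrates $\lambda$-mass $\ge 1-2\varepsilon$ on the graph of $\theta$, so that $\int h\,d\lambda\le \varepsilon+\lambda(D^c)+\mu(E_1)<4\varepsilon$. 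Two remarks worth keeping in mind. First, as you flag, the hypothesis already presupposes that $\theta$ is measurable and $\theta(A)$ is measurable for $A\subset X\setminus E_2$; in this paper's application $\theta$ is the bijection from Lemma \ref{lemma:4.9}, so this is harmless, and you should also not worry about $\nu(\theta(A))=0$ since hypothesis (2) forces $\mu(A)=0$ there too. Second, the maximal coupling and the disintegration $\{\pi_y\}$ require $Y$ to be a standard (Lebesgue) space, which is given; the disintegration is only defined $\tilde\nu$-a.e., which is exactly what is needed to define $\pi_{\theta(x)}$ for $\mu$-a.e.\ $x$. The proof in \cite{CH} is more hands-on --- it corrects $\theta$ on a small set to a genuine measure-preserving bijection between large subsets and joins the leftovers independently --- but that is morally the same construction as your maximal coupling, and both give a universal constant of the same order.
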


\vspace{.5cm}

\noindent \textbf{Conclusion of the proof of Theorem \ref{theo:KBGeneral}:} \quad \\

The function $\theta$ required in \ref{lemma:functiontheta} is constructed in the following lemma.

\begin{lemma}\cite[Lemma 4.9]{PTV2} \label{lemma:4.9}
For any $\delta>0$, there exists $0<\delta_1<\delta$ with the following property. Let $\Pi$ be a $\delta_1$-rectangle and $E$ a set intersecting $\Pi$ leafwise. Then we can construct a bijective function $\theta:E\cap \Pi \to \Pi$ such that for every measurable set $F \subset E\cap \Pi$ we have
\[ \frac{m^P_{\Pi}(\theta(F))}{m^P_{\Pi}(\Pi)} = \frac{m^P_{\Pi}(F)}{m^P_{\Pi}(E\cap \Pi)}  \]
and for every $x\in E\cap \Pi$, $\theta(x) \in \mathcal F^{cs}(x)$.
\end{lemma}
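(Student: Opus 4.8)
The plan is to build $\theta$ as the identity in the unstable direction followed by a measure-spreading map in the center-stable direction, exploiting that inside a sufficiently small rectangle the reference measure $m^P_\Pi$ is a genuine product and that a center-stable leaf carries a nonatomic measure.

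First I would fix $\delta_1\in(0,\delta)$ small enough that every $\delta_1$-rectangle $\Pi$ admits the product description of the lemma preceding Definition~\ref{defi:regular}: choosing the distinguished point $z\in\Pi$, one identifies $\Pi$ with $W^u_\Pi(z)\times W^{cs}_\Pi(z)$, the pair $(p,q)$ corresponding to the unique intersection point of $W^{cs}_\Pi(p)$ with $W^u_\Pi(q)$, and under this identification $m^P_\Pi=m^u_z\times\nu^{cs}_z$ with $\nu^{cs}_z\ll\lambda^{cs}_z$; in particular $\nu^{cs}_z$ is nonatomic. Unwinding what it means for $E$ to intersect $\Pi$ leafwise, $E\cap\Pi$ is, modulo a null set, a union of local unstable manifolds $W^u_\Pi(x)$ of $\Pi$, so in these coordinates $E\cap\Pi=W^u_\Pi(z)\times S$ for a measurable $S\subseteq W^{cs}_\Pi(z)$ with $\nu^{cs}_z(S)>0$. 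Observe that any map of the form $\theta(p,q)=(p,\sigma(q))$ automatically satisfies $\theta(x)\in\mathcal F^{cs}(x)$, since keeping the first coordinate fixed means staying on a single center-stable plaque.

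Next I would construct $\sigma$ via the isomorphism theorem for Lebesgue spaces: equipping $S$ with the normalization of $\nu^{cs}_z$ restricted to it, and $W^{cs}_\Pi(z)$ with the normalization of $\nu^{cs}_z$, we obtain two nonatomic standard probability spaces, hence isomorphic modulo null sets; rescaling such an isomorphism yields a Borel bijection (mod zero) $\sigma:S\to W^{cs}_\Pi(z)$ with
\[ \nu^{cs}_z(\sigma(A))=\frac{\nu^{cs}_z(W^{cs}_\Pi(z))}{\nu^{cs}_z(S)}\,\nu^{cs}_z(A)\qquad\text{for every measurable }A\subseteq S. \]
Setting $\theta(p,q):=(p,\sigma(q))$ on $E\cap\Pi=W^u_\Pi(z)\times S$ (extending $\sigma$ arbitrarily on the exceptional null set, keeping its values in $W^{cs}_\Pi(z)$) gives a bijection of $E\cap\Pi$ onto $\Pi$ with $\theta(x)\in\mathcal F^{cs}(x)$.

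Finally, the ratio identity is a Fubini computation: writing $F=\{(p,q):q\in F_p\}$ with $F_p\subseteq S$, one gets $m^P_\Pi(\theta(F))=\int_{W^u_\Pi(z)}\nu^{cs}_z(\sigma(F_p))\,dm^u_z(p)=\frac{\nu^{cs}_z(W^{cs}_\Pi(z))}{\nu^{cs}_z(S)}\,m^P_\Pi(F)$, while $m^P_\Pi(\Pi)=m^u_z(W^u_\Pi(z))\,\nu^{cs}_z(W^{cs}_\Pi(z))$ and $m^P_\Pi(E\cap\Pi)=m^u_z(W^u_\Pi(z))\,\nu^{cs}_z(S)$, so $m^P_\Pi(\Pi)/m^P_\Pi(E\cap\Pi)=\nu^{cs}_z(W^{cs}_\Pi(z))/\nu^{cs}_z(S)$, and dividing gives the claimed equality. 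I expect the only genuinely delicate point to be the reduction performed in the second paragraph: that the leafwise structure of $E$ really forces $E\cap\Pi$ to be unstable-saturated in the product coordinates (if the center-stable slices of $E$ had mass depending on $p$, no single fibrewise rescaling could yield the clean ratio), together with the nonatomicity of $\nu^{cs}_z$, which rests on the product and absolute-continuity structure of $m^P_\Pi$ recorded earlier. Everything after that is soft measure theory.
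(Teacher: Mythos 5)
Your proposal is correct. Note that the paper does not itself prove Lemma~\ref{lemma:4.9} --- it is cited directly from \cite{PTV2} --- so there is no in-text proof to compare against; but the argument in \cite{PTV2}, which follows the Chernov--Haskell construction, proceeds exactly as you describe: pass to product coordinates $\Pi\cong W^u_\Pi(z)\times W^{cs}_\Pi(z)$ using the lemma preceding Definition~\ref{defi:regular}, so that $m^P_\Pi = m^u_z\times\nu^{cs}_z$ with $\nu^{cs}_z\ll\lambda^{cs}_z$ nonatomic; read ``intersecting $\Pi$ leafwise'' as $E\cap\Pi=W^u_\Pi(z)\times S$; build a measure-rescaling Borel bijection $\sigma\colon S\to W^{cs}_\Pi(z)$ via the isomorphism theorem for nonatomic Lebesgue spaces; set $\theta(p,q)=(p,\sigma(q))$ (so $\theta(x)\in\mathcal F^{cs}(x)$ automatically); and verify the ratio identity by Fubini. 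The one step worth tightening is that the isomorphism theorem only delivers $\sigma$ as a bijection modulo null sets, while the lemma asserts a genuine bijection $E\cap\Pi\to\Pi$; this is repaired in the usual way by arranging the two exceptional null sets to have equal cardinality (e.g.\ by enlarging both to continuum-sized null sets) and pairing them off by an arbitrary bijection, which leaves the measure identity untouched. With that detail spelled out your proof is complete and is the standard one.
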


The final step is to prove that any partition $\alpha = \{A_1,\ldots, A_k\}$ of $M$ by subsets with piecewise smooth boundaries is very weak Bernoulli. 

Consider such a partition $\alpha$. Given a $\delta$-regular covering of $M$, using Lemma \ref{lemma:4.9}, in \cite[Lemma 4.12, pg.354-357]{PTV2} it is proved that given any $\beta>0$, there exists $\widetilde{N_1}>0$ for which, for any $\widetilde{N}'\geq \widetilde{N} \geq \widetilde{N_1}$ and $\beta$-almost every element $A \in \bigvee_{\widetilde{N}}^{\widetilde{N}'} f^i \alpha$,
there exists a $c\cdot \varepsilon$-measure preserving function $\theta:A\to M$ with
\begin{equation}\label{eq:LS}
\theta(x) \in \mathcal F^{cs}(x) \cap R_i,\end{equation}
where $c$ is a constant independent of $\varepsilon$.

Now, to prove that the Cesaro sum appearing in Lemma \ref{lemma:functiontheta} is small we use Birkhoff theorem and the Lyapunov stable center. Indeed, since $f$ has Lyapunov stable center and $\mathcal F^s$ is contracted, by \eqref{eq:LS} we may take $\delta$ small enough so that if $x,y \in R_i$ and $y\in \mathcal F^c(x) \cap R_i$, then
\begin{equation}\label{eq:LS2}
d(f^n(x),f^n(\theta(x))) < \varepsilon, \quad \forall n\in \mathbb N.\end{equation}
In particular, for $x\in A_{l_i(x)}$ we have by \eqref{eq:LS2} $d(f^i(x), \partial A_{l_i(x)}) < \varepsilon \Rightarrow f^i(x) \in O_{\varepsilon}(A_{l_i(x)})$,
where $O_{\varepsilon}(X)$ denotes the $\varepsilon$-neighborhood of a set $X$. Let  $O_{\varepsilon} = \bigcup_{i=1}^kO_{\varepsilon}(A_i)$.
By Birkhoff Theorem we have
\[\frac{1}{n}\sum_{i=1}^{n}e(l_i(x)-m_i(\theta(x))) \leq \frac{1}{n}\sum_{i=1}^{n}\chi_{O_{\varepsilon}}(f^j(x)) \rightarrow m(O_{\varepsilon} ) , \quad \text{as} \; n\rightarrow \infty.\]
Since $m(O_{\varepsilon} ) \rightarrow 0$ as $\varepsilon \rightarrow 0$, by Lemma \ref{lemma:functiontheta} it follows that $\alpha$ is indeed VWB.

Finally, by taking an increasing sequence of partitions $\alpha_1 < \alpha_2 < \ldots$, each $\alpha_i$ being composed of sets with piecewise smooth boundaries, and such that $\operatorname{diam}(\alpha_i) \rightarrow 0$ we conclude by Theorem \ref{theo:ornsteins} that $f$ is a Bernoulli automorphism as we wanted to show. $\square$

\section*{Appendix B: An extra property of the continuous invariant metric system $\{D_x\}$} \label{app:curiosity}

For future use it may be convenient to have in mind that, with respect to an ergodic invariant measure $\mu$, the supremums taken on the definition of the metric system $\{D_x\}_{x\in M}$, are assumed over $n\in \mathbb N$ for $\mu$-almost every point $x\in M$. More precisely we have:

\begin{proposition}
For $f:M \to M$ and $\{D_x\}$ as in Theorem \ref{lemma:superaux}, given any ergodic $f$-invariant measure $\mu$, there exists an $f$-invariant subset $S\subset X$ of full $\mu$-measure such that for all $x\in S$ we have
\[\sup_{n\in \mathbb N} d^{\mathcal F}(f^n(x),f^n(y))= \sup_{n\in \mathbb Z} d^{\mathcal F}(f^n(x),f^n(y)), \quad \forall \; y\in \mathcal F(x).\]
\end{proposition}
\begin{proof}
Clearly $\sup_{n\in \mathbb N} d^{\mathcal F}(f^n(x),f^n(y)) \leq \sup_{n\in \mathbb Z} d^{\mathcal F}(f^n(x),f^n(y))$ so that it is enough to prove the other hand of the inequality. For each leaf $x\in \mathcal F$, recall the notations
\[D_x (x,y):= \sup_{n\in \mathbb Z} d^{\mathcal F}(f^n(x),f^n(y)), \quad \text{and} \quad B_{x}(x,r) = \{y\in \mathcal F(x) : \; D_x(x,y)<r\}.\]

Consider the sets
\[S^+_r(\varepsilon):=\{x : \quad  \exists  \; n \geq 0, \; |r|-d^{\mathcal F}(f^{n}(x), f^n(x^r))  < \varepsilon \},\]
\[S^-_r(\varepsilon):=\{x :  \quad \exists \; n <0, \; |r|-d^{\mathcal F}(f^{n}(x), f^n(x^r))  < \varepsilon \}.\]

\begin{lemma} $S^+_r(\varepsilon)$ and $S^-_r(\varepsilon)$ are measurable sets.
\end{lemma}
\begin{proof}[proof of the Lemma]
By the claim proved in Theorem \ref{lemma:superaux}, the map $g(x)=d^{\mathcal F}(x,x^r)$ is continuous. Now, observe that since $f(x^r)=f(x)^r$ we have
\[g\circ f^n(x) = d^{\mathcal F}(f^n(x),f^n(x^r)).\]
Thus,
\[S^+_r(\varepsilon) = \bigcup_{n\geq 0} f^{-n}(g^{-1}(|r|-\varepsilon, \infty)),\]
which is a measurable set. Analogous for $S^-_r(\varepsilon)$.
\end{proof}

Obviously 
\[M= S^+_r(\varepsilon) \cup S^-_r(\varepsilon),\]
thus at least one of those sets must have positive measure, say  $\mu(S^+_r(\varepsilon))>0$. By Poincar\'e recurrence, $\mu$-almost every point of $S^+_r(\varepsilon)$ returns to itself, which implies that the set 
%
\[S^{+,\infty}_r(\varepsilon):= \limsup_{n \geq 0} f^{-n}(S^+_r(\varepsilon)) = \{x: \exists \; n_i \rightarrow \infty, f^{n_i}(x) \in S^{+}_r(\varepsilon)\},\]
has the same measure as $S^{+}_r(\varepsilon)$. Since $S^{+,\infty}_r(\varepsilon)$ is $f$ invariant, ergodicity implies that $\mu(S^{+,\infty}_r(\varepsilon))=1$. Now, observe that
\[S^{+}_r(\varepsilon) = \bigcup_{m\geq 0} S^{+}_r(\varepsilon)(m),\]
where 
\[S^{+}_r(\varepsilon)(m)= \{x : |r|-d^{\mathcal F}(f^{m}(x), f^m(x^r))  < \varepsilon \}.\]
Thus, for some $m_0\geq 0$ we must have $\mu(S^{+}_r(\varepsilon)(m_0))>0$. But $f^{2m_0}(S^{+}_r(\varepsilon)(m_0)) \subset S^-_r(\varepsilon)$,
which implies $\mu(S^-_r(\varepsilon))>0$. Again by Poincar\'e recurrence followed by ergodicity we conclude that the set  \[S^{-,\infty}_r(\varepsilon):= \limsup_{n \geq 0} f^{n}(S^-_r(\varepsilon)),\] has full measure as well, that is, $\mu(S^{+,\infty}_r(\varepsilon)) = \mu(S^{-,\infty}_r(\varepsilon))=1$.
Consequently, 
\[\mu(S') =1, \quad \text{where} \quad S':=\bigcap_{r, \varepsilon \in  \mathbb Q_+} [S^{+,\infty}_r(\varepsilon) \cap S^{-,\infty}_r(\varepsilon)] .\]
Let $x\in S'$ and $y\in \mathcal F(x)$. If $D_x(x,y) \in \mathbb Q$ then, by the definition of $S'$,
\[\sup_{n\in \mathbb N} d^{\mathcal F}(f^n(x),f^n(y))= \sup_{n\in \mathbb Z} d^{\mathcal F}(f^n(x),f^n(y)).\]
Now, if $D_x(x,y) \notin \mathbb Q$ consider $(y_n) $ a sequence of points in $\mathcal F(x)$ with $y_n \rightarrow y$ and $D_x(x,y_n) \in \mathbb Q$. Given any $\delta'>0$, by the leafwise equicontinuity of $f$, there exists $n_0 \in \mathbb N$ such that
\[n\geq n_0 \Rightarrow d^{\mathcal F}(f^{-m}(y), f^{-m}(y_n)) < \delta' , \quad \forall m\in \mathbb N.\]
In particular, for $n\geq n_0$,
\begin{align*}
d^{\mathcal F}(f^{-m}(x), f^{-m}(y_n)) \leq & d^{\mathcal F}(f^{-m}(x), f^{-m}(y)) +  d^{\mathcal F}(f^{-m}(y), f^{-m}(y_n))\\
 < & d^{\mathcal F}(f^{-m}(x), f^{-m}(y))+\delta', \quad \forall m \in \mathbb N. \end{align*}
Thus, for each $n\geq n_0$, since $D_x(x, y_n) \in \mathbb Q$ there exists $m_n \in \mathbb N$ for which 
\[d^{\mathcal F}(f^{-m_n}(x), f^{-m_n}(y_n)) \geq D_x(x, y_n)-\delta.\]
Therefore
\[d^{\mathcal F}(f^{-m_n}(x), f^{-m_n}(y)) > D_x(x, y_n)-2\delta, \quad \forall n\geq n_0.\]
Since $\delta'$ is arbitrary we have
\[\sup_{m\in \mathbb Z_-}d^{\mathcal F}(f^{-m}(x), f^{-m}(y)) \geq D_x(x, y_n), \quad \forall n\geq n_0.\]
But clearly, continuity of $f$ implies $\limsup_{n} D_x(x,y_n) \geq D_x(x,y)$, thus $\sup_{m\in \mathbb Z_-}d^{\mathcal F}(f^{-m}(x), f^{-m}(y)) = D_x(x,y)$. That is, for $\mu$-almost every point $x\in S'$, for every $y\in \mathcal F(x)$,  $\sup_{n\in \mathbb N} d^{\mathcal F}(f^n(x),f^n(y))= \sup_{n\in \mathbb Z} d^{\mathcal F}(f^n(x),f^n(y))$,
as we wanted to show. In particular,
\[\mu\left( \bigcap_{\varepsilon \in \mathbb R, r\in \mathbb R} [S^+(r,\varepsilon) \cap S^-(r,\varepsilon) ]\right) =1.\]
Finally, the set
\[S := \bigcap_{n\in \mathbb N} f^{n}\left(  \bigcap_{\varepsilon \in \mathbb R, r\in \mathbb R} [S^+(r,\varepsilon) \cap S^-(r,\varepsilon)] \right),  \]
is $f$-invariant, has full measure and satisfies the requirement of the statement.

\end{proof}

\section{Acknowledgements}
The author acknowledges Ali Tahzibi who introduced him to the main problems being addressed in this paper. This paper was partially written while the author was working as a visitor researcher at Universit\'e Paris-Sud, to whom he greatly thanks for the hospitality. We also acknowledge FAPESP for its financial support through processes \# 2018/25624-0 and \#2022/07762-2.

\bibliographystyle{plain}
\bibliography{Referencias2.bib}

\end{document}